\providecommand\@dotsep{5}\def\listtodoname{List of Todos}\def\listoftodos{\hypersetup{linkcolor=black}\@starttoc{tdo}\listtodoname\hypersetup{linkcolor=blue}}\makeatother
\newtheorem{lemma}{Lemma}
\newtheorem{theorem}{Theorem}
\newtheorem{definition}{Definition}
\theoremstyle{remark}
\newcommand{\bel}{\begin{equation} \label}
\newcommand{\ee}{\end{equation}}
\def\beq{\begin{equation}}
\def\eeq{\end{equation}}
\newcommand{\bea}{\begin{eqnarray}}
\newcommand{\eea}{\end{eqnarray}}
\newcommand{\beas}{\begin{eqnarray*}}
\newcommand{\eeas}{\end{eqnarray*}}
\newcommand{\pd}{\partial}
\renewcommand{\div}{\mathrm{div}\,}  
\def\R{\mathbb R}
\def\M{\mathcal M}
\def\g{\bar g}
\def\CI{\mathcal C}
\renewcommand{\leq}{\leqslant}
\renewcommand{\geq}{\geqslant}
\def\p{\partial}
\newcommand\rotom{\mho}
\DeclareMathOperator{\Tr}{Tr}
\DeclareMathOperator{\spn}{span}
\DeclareMathOperator{\supp}{supp}
\date{Compiled \today}
\title
{Recovery of zeroth order coefficients in non-linear wave equations}
\author[A. Feizmohammadi]{Ali Feizmohammadi}
\address{Department of Mathematics, University College London, 
Gower Street, London UK, WC1E 6BT.}
\email{a.feizmohammadi@ucl.ac.uk}
\author[L. Oksanen]{Lauri Oksanen}
\address{Department of Mathematics, University College London, 
Gower Street, London UK, WC1E 6BT.}
\email{l.oksanen@ucl.ac.uk}
\begin{document}
\maketitle
\begin{abstract}
This paper is concerned with the resolution of an inverse problem related to the recovery of a scalar (potential) function $V$ from the source to solution map of the semi-linear equation $(\Box_{\g}+V)u+u^3=0$ on a globally hyperbolic
Lorentzian manifold $(\M,\g)$. We first study the simpler model problem where the geometry is the Minkowski space and prove the uniqueness of $V$ through the use of geometric optics and a three-fold wave interaction arising from the cubic non-linearity. Subsequently, the result is generalized to globally hyperbolic Lorentzian manifolds by using Gaussian beams.
\end{abstract}
\tableofcontents

\section{Introduction}

Kurylev, Lassas and Uhlmann introduced an approach to solve inverse coefficient determination problems for non-linear hyperbolic equations in \cite{KLU_invent}.
The approach is based on considering multi-parameter families of solutions, and simultaneous linearizations with respect to each of the parameters. If only a one-parameter family of solutions is employed, the linearization yields simply a solution to the linearized version of the non-linear hyperbolic equation under consideration. However, simultaneous linearizations cause solutions to the linearized equation to interact in a non-linear manner, and 
this leads to richer dynamics in propagation of singularities (or wave packets) than in the case of linear hyperbolic equations. 

In particular, it is possible to construct multi-parameter families of solutions so that once linearized suitably, the corresponding solution to the linearized equation has propagating singularities that originate far away from the sources that generated the family. This then allows for solving inverse coefficient determination problems for non-linear hyperbolic equations in much more general geometric context than what is currently within our reach for linear equations \cite{Eskin, FIKO, KKL}. 

In \cite{KLU_invent} the approach was applied to determination of the conformal class of the Lorentzian metric tensor giving the leading order coefficients in a wave equation with quadratic non-linearity. 
The recovery of leading order coefficients has been considered also in the context of Einstein equations in \cite{KLOU} and subsequently in
\cite{LWUI,UW}. We mention also \cite{WZ} where the leading order coefficient were recovered in the presence of a quadratic derivative nonlinearity. The first order coefficients were considered in 
\cite{CLOP}, and in the present paper we show that the zeroth order coefficients can be recovered as well. 

We modify the approach \cite{KLU_invent} substantially by using systematically wave packets, and thus avoid the use of microlocal analysis present in all the previous works based on this approach.
In the hope that this makes the approach more accessible, we give first the proof in Minkowski geometry, and use classical geometric optics solutions.
Then we proceed to show our main result (Theorem \ref{t1} below) saying, in physical terms, that remote sensing of the potential $V$ in 
$(\Box_{\g}+V)u+u^3=0$ is possible on a globally hyperbolic
Lorentzian manifold $(\M,\g)$.
In this case the wave packets that we use are Gaussian beams. 
The cubic non-linearity is chosen because it leads to the simplest computations using our approach. The cubic choice is discussed in more detail in Section 1.4 of \cite{CLOP}.

Inverse problems for non-linear hyperbolic equations have been under an active study. 
We mention that the approach \cite{KLU_invent}
has also been applied to recovery of coefficients appearing in non-linear terms \cite{LUW}, and to a problem arising in seismic imaging \cite{HUW}. Recently two other approaches were used by Nakamura and Vashisth to recover time-independent leading order coefficients, as well as coefficients in non-linear terms \cite{NV}, and by Kian to recover a general function corresponding to the non-linearity and also including zeroth order coefficients \cite{K}. The latter result is based on a reduction via linearization to the problem to recover the zeroth order coefficient in a linear wave equation. For this reason, contrary to our result, the geometric context in \cite{K} in confined to the cases where results are available for linear wave equations.  

\section{The case of Minkowski geometry} 
\label{sec1}

We consider $\R^{1+n}$, with $n \geq 2$,
and write $(x^0,x^1,\ldots,x^n)=(t,x')=x$ for the Cartesian coordinates.
Let $r,T>0$ and write 
    \begin{align}\label{def_mho}
\rotom=(0,T)\times B(0,r),
    \end{align}
where $B(0,r)$ denotes the ball centered at the origin and radius $r$ in $\R^n$. 
We will formulate an inverse coefficient determination problem with data given on $\mho$.
Let $\kappa>0$ be a fixed sufficiently large integer and define $\mathscr{C}$ as a small neighborhood of the origin in the $\CI^{\kappa}_c(\rotom)$ topology. 
Let $V \in \CI^{\infty}(\R^{1+n})$, and for each $f \in \mathscr{C}$, consider the non-linear wave equation 
\bel{pf1}
\begin{aligned}
\begin{cases}
\Box u + V u +u^3=f, 
&\forall (t,x') \in (0,T)\times \R^n,
\\
u(0,x')= 0,\, \p_t u(0,x')=0,
&\forall x' \in \R^n,
\end{cases}
    \end{aligned}
\ee
where $\Box$ is the d'Alembert operator, that is,
$$
\Box u = \p_t^2 u - \sum_{j=1}^n \p_{x^j}^2 u.
$$
When $\kappa$ is large enough and $\mathscr{C}$ is small enough, there exists a unique solution $u$ to equation \eqref{pf1}. We subsequently define the source to solution map for equation \eqref{pf1} as 
$$ L_V(f) = u|_{\rotom}, \quad \forall f \in \mathscr{C}.
$$

The {\em inverse coefficient determination problem} is to find $V$ given the map $L_V$ up to the natural obstruction given by the finite speed of propagation.

In order to be able to determine $V(p)$ for a point $p \in \R^{1+n}$, there must be a signal, in the form of a non-vanishing solution to \eqref{pf1}, from $\mho$ to $p$ and from $p$ to $\mho$. Due to the finite speed of propagation, a signal from a point $q = (t_0,x_0') \in \R^{1+n}$ can reach only the set 
    \begin{align}\label{future_min}
\mathscr J_{+}(q)=\{(t,x') \in \R^{1+n}\,|\, t \geq t_0,\ |x'-x_0'| \leq t - t_0 \},
    \end{align}
called the future of $q$. We define also the past of $q$ by 
    \begin{align}\label{past_min}
\mathscr J_{-}(q)=\{(t,x') \in \R^{1+n}\,|\, t \leq t_0,\ |x'-x_0'| \leq t_0 - t \},
    \end{align}
and write $\mathscr J_{\pm}(\rotom)=\bigcup_{q \in \rotom} \mathscr J_{\pm}(q)$.
Then $L_V$ contains no information on $V$ outside the causal diamond
$$
\mathbb D := \mathscr J_+(\rotom) \cap \mathscr J_-(\rotom)
= \{ (t,x') \in (0,T) \times \R^{n}\,|\, |x'| \leq r + t,\ |x'| \leq r + T - t \},
$$
see Figure \ref{fig_3pts} in Section \ref{linearmin} below.
On the other hand, we will show the following theorem saying that $L_V$ determines $V$ on $\mathbb D$. 

\begin{theorem}
\label{t0}
Let $L_{V_1},L_{V_2}$ denote the source to solution map for equation \eqref{pf1} subject to functions $V_1,V_2 \in \CI^{\infty}(\R^{1+n})$ respectively. Then:
$$L_{V_1}(f)=L_{V_2}(f) \quad \forall f \in \mathscr{C} \implies V_1=V_2 \quad \text{on}\quad \mathbb D.$$
\end{theorem}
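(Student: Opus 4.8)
The plan is to follow the multiple-linearization scheme of \cite{KLU_invent}, but to phrase the whole argument through one explicit integral identity so that no microlocal machinery is needed. First I would set up the linearization. For sources $f_1,f_2,f_3\in\mathscr C$ and small parameters $\epsilon=(\epsilon_1,\epsilon_2,\epsilon_3)$, let $u_\epsilon$ solve \eqref{pf1} with $f=\sum_{j=1}^3\epsilon_j f_j$; the smooth dependence of $u_\epsilon$ on $\epsilon$ (from the well-posedness quoted after \eqref{pf1}) justifies differentiation. The first derivatives $v_j=\partial_{\epsilon_j}u_\epsilon|_{\epsilon=0}$ solve $\Box v_j+Vv_j=f_j$ with vanishing Cauchy data, so $\supp v_j\subset\mathscr J_+(\rotom)$ by finite speed of propagation. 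The cubic term contributes only at third order: the mixed derivative $w=\partial_{\epsilon_1}\partial_{\epsilon_2}\partial_{\epsilon_3}u_\epsilon|_{\epsilon=0}$ solves
\[
\Box w+Vw=-6\,v_1v_2v_3,\qquad w|_{t=0}=\partial_t w|_{t=0}=0,
\]
and $w|_{\rotom}=\partial_{\epsilon_1}\partial_{\epsilon_2}\partial_{\epsilon_3}L_V(\sum_j\epsilon_j f_j)|_{\epsilon=0}$ is therefore determined by the data $L_V$.

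Next I would introduce a detector and derive the basic identity. Let $v_4$ solve the backward problem $\Box v_4+Vv_4=f_4$ with $v_4|_{t=T}=\partial_t v_4|_{t=T}=0$, so $\supp v_4\subset\mathscr J_-(\rotom)$. Pairing the equation for $w$ with $v_4$ over the slab $(0,T)\times\R^n$ and integrating by parts twice, all temporal boundary contributions vanish (at $t=0$ because $w$ has trivial Cauchy data, at $t=T$ because $v_4$ does), while the spatial terms vanish by the bounded spatial supports. This gives
\[
-6\int_{\mathbb D} v_1v_2v_3v_4\,\dd x=\int_{\rotom} w\,f_4\,\dd x,
\]
the product being supported in $\mathbb D=\mathscr J_+(\rotom)\cap\mathscr J_-(\rotom)$. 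The right-hand side depends only on $L_V$ and $f_4$, so if $L_{V_1}=L_{V_2}$ then $\int_{\mathbb D}v_1^{(1)}v_2^{(1)}v_3^{(1)}v_4^{(1)}\,\dd x=\int_{\mathbb D}v_1^{(2)}v_2^{(2)}v_3^{(2)}v_4^{(2)}\,\dd x$ for every choice of sources, the superscript recording the potential.

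The extraction of $V$ from this equality is the heart of the matter. Fix $p\in\mathbb D$ and choose $f_1,\dots,f_4$ so that the $v_j$ are geometric optics wave packets of large frequency $\lambda$ concentrating on null geodesics through $p$ (the three future ones carrying $v_1,v_2,v_3$, the past one carrying $v_4$), with null covectors at $p$ satisfying $\xi_1+\xi_2+\xi_3+\xi_4=0$; such resonant configurations exist precisely because $n\ge 2$. The total phase is then stationary at $p$, so the oscillatory integral localizes there as $\lambda\to\infty$. Writing $v_j^{(1)}=v_j^{(2)}+d_j$, where $d_j$ solves $\Box d_j+V_1 d_j=-W v_j^{(2)}$ with $W:=V_1-V_2$, the subtraction above cancels the $V$-independent principal term; the leading survivor is smaller by a factor $\lambda^{-1}$ and is governed by the transport equations for the $d_j$, whose source carries $W$. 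A stationary-phase evaluation then yields, in the limit, a relation of the form $\sum_k\big(\int_{\gamma_k}W\,a_k\big)\prod_{j\ne k}a_j(p)=0$, a combination of integrals of $W$ along the null segments $\gamma_k$ joining $\rotom$ to $p$.

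The main obstacle is to upgrade this family of null-segment integrals to the pointwise identity $W(p)=0$ without invoking injectivity of a light-ray transform, which is unavailable in the generality we seek. I would exploit the remaining freedom in the construction: the covectors may be varied over the resonant set at $p$, the interaction point $p$ may be moved through $\mathbb D$, and the initial amplitudes may be prescribed. Differentiating the relations as $p$ moves along a null direction converts the segment integrals into the integrand $W(p)$ by the fundamental theorem of calculus, and an appropriate choice of configurations decouples the four contributions. Carrying this out, together with the uniform-in-$\lambda$ control of the geometric-optics remainders needed to justify the asymptotics, forces $W(p)=0$; since $p\in\mathbb D$ was arbitrary, $V_1=V_2$ on $\mathbb D$.
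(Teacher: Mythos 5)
Your overall architecture is the same as the paper's: three-fold linearization, the integration-by-parts identity $-6\int v_1v_2v_3v_4\,dx=\int_{\rotom} w f_4\,dx$ against a backward solution, resonant geometric-optics packets through $p$, and extraction of integrals of the potential at the first subleading order in the frequency. Your subtraction bookkeeping ($v_j^{(1)}=v_j^{(2)}+d_j$ with $\Box d_j+V_1d_j=-Wv_j^{(2)}$) is a legitimate variant of the paper's splitting $a_1=b_1+c_1$ into $V$-independent and $V$-dependent amplitude terms, and it has the mild advantage of not needing the preliminary step that $L_V$ determines $V$ on $\rotom$ and hence the sources. However, there are two genuine gaps. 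The first is in your resonance condition: if $\xi_1,\xi_2,\xi_3$ are null covectors lying in the same causal cone, their sum is timelike unless they are parallel, so $\xi_4=-(\xi_1+\xi_2+\xi_3)$ can be null only if the three ``future'' geodesics are one and the same line. Thus the configuration ``three future packets, one past packet, weights all equal to one'' does not exist for distinct geodesics; resonance among four distinct null directions forces a two-plus-two split of time orientations, i.e.\ signed and unequal weights. This is exactly what the paper arranges in \eqref{linear dependence}--\eqref{def_kappa}, where $\kappa_1>0$ but $\kappa_2,\kappa_3<0$ (two of the four packets are effectively complex conjugates). This is repairable, but it is not a cosmetic point, because the repaired weights are what make the second step possible.

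The second gap is the serious one: the decoupling of the four null-segment integrals, which is the heart of the paper's proof, is asserted rather than solved. You correctly flag it as the main obstacle, but ``an appropriate choice of configurations decouples the four contributions'' is a restatement of the problem. In particular, the differentiation you propose does not work as described: when $p$ moves along a null direction, the other three geodesics and their covectors must be recomputed at the new $p$ to preserve resonance, so differentiating the relation $\sum_k\bigl(\int_{\gamma_k}W\bigr)\prod_{j\neq k}a_j(p)=0$ produces $W(p)$ plus uncontrolled derivatives of the remaining three segment integrals; the fundamental theorem of calculus isolates nothing. The paper's mechanism is a degeneration hidden in the weights: taking $\xi^{(0)}=\sigma^2\xi^+$ forces the reparametrization $q^{(0)}=\gamma_{p,\xi^{(0)}}(s_0/\sigma^2)$ in \eqref{def_qs}, so the $\gamma^{(0)}$-integral enters with a factor $\sigma^{-2}$ while the coefficients $\kappa_1,\kappa_2,\kappa_3$ of the three incoming integrals stay bounded away from $0$ and $\infty$ as $\sigma\to 0$. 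Multiplying the determined quantity by $\sigma^2$ and letting $\sigma\to 0$ annihilates the three incoming contributions and leaves the single integral $\int_0^{s_0}V(-s\xi^{+\sharp}+q^{(0)})\,ds$, and only then does differentiation in $s_0$ (with $q^{(0)}$ fixed) yield $V(p)$. Without this asymmetric-scaling limit, or an equivalent mechanism for isolating one segment, your final step does not go through, and the proof is incomplete precisely at its crux.
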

\bigskip

The non-trivial content of the theorem is the remote determination on $\mathbb D \setminus \mho$, as 
it is straightforward to see that $L_V$ determines $V$ on $\mho$.
To see this, let $f \in \CI^\infty_c(\rotom)$
and consider the one-parameter family of sources $f_\epsilon :=\epsilon f$, $\epsilon \in \R$.
For small enough $\epsilon$, 
it holds that $f_\epsilon \in \mathscr C$,
and we let $u_\epsilon$ denote the unique solution to  (\ref{pf1}) subject to this source term. Then $u :=\p_\epsilon u_\epsilon |_{\epsilon=0}$ solves the linear wave equation
\bel{eq4}
\begin{aligned}
\begin{cases}
\Box u + V u =f, 
&\forall (t,x') \in (0,T)\times \R^n,
\\
u(0,x')= 0,\, \p_t u(0,x')=0,
&\forall x' \in \R^n,
\end{cases}
    \end{aligned}
\ee 
and $u|_\mho = \p_\epsilon L_V(f_\epsilon)|_{\epsilon = 0}$.
Observe that if $u(q) \ne 0$ for a point $q \in \mho$, then 
$$
V(q) = \frac{f(q) - \Box u(q)}{u(q)}.
$$
It remains to show that for any $q \in \mho$ there is $f \in \CI^\infty_c(\rotom)$ such that the solution $u$ of (\ref{eq4}) satisfies $u(q) \ne 0$.
But this follows simply by taking any $u \in \CI^\infty_c(\rotom)$ with this property, and setting $f = \Box u + V u$.

Let us also point out that it is an open question if $V|_{\mathbb D}$ is determined by the linearized source to solution map,
$$
\mathcal L_V f = u|_\mho, \quad \forall f \in \CI^{\kappa}_c(\rotom),
$$
where $u$ is the solution of (\ref{eq4}). 
Only in the case that $V(t,x')$ is real-analytic in $t$, this is known to hold due to the variant of the Boundary Control method by Eskin \cite{Eskin}.
The approach to recover time-dependent coefficients in wave equations based on geometric optics, originating from \cite{Stefanov}, fails since no wave packet leaving $\mho$ returns there. For this approach to work, the set on which the data is given (i.e. $\mho$ in our case) needs to enclose the region where $V$ is to be determined (i.e. $\mathbb D$ in our case). 
Like \cite{Stefanov}, the approach in the present section is based on geometric optics, but the difference is that the cubic non-linearity in (\ref{pf1}) allows us to solve the inverse problem with the optimal relation between $\mho$ and $\mathbb D$.

Before entering into the proof of Theorem \ref{t0} in detail, let us briefly explain how the non-linearity is used. Let $f_1, f_2, f_3 \in \CI^\infty_c(\rotom)$ and consider the three-parameter family of sources 
    \begin{align}\label{f_family}
f_\epsilon :=\epsilon_1 f_1 + \epsilon_2 f_2 + \epsilon_3 f_3, \quad \forall \epsilon := (\epsilon_1, \epsilon_2, \epsilon_3) \in \R^3.
    \end{align}
For small enough $\epsilon_1$, $\epsilon_2$ and $\epsilon_3$, 
it holds that $f_\epsilon \in \mathscr C$,
and we let $u_\epsilon$ denote the unique solution to  (\ref{pf1}) subject to this source term. Then 
    \begin{align}\label{def_3lin}
u :=\p_{\epsilon_1} \p_{\epsilon_2} \p_{\epsilon_3} u_\epsilon |_{\epsilon=0}
    \end{align}
solves the linear wave equation (\ref{eq4}) with 
    \begin{align}\label{interaction}
f = -6 u^{(1)}u^{(2)}u^{(3)}, \quad u^{(j)} = \p_{\epsilon_j} u_\epsilon |_{\epsilon=0},
    \end{align}
and $u^{(j)}$ satisfies the same equation with $f=f_j$.
In the proof below, we will use sources $f_j$ that generate geometric optics solutions $u^{(j)}$ with carefully chosen phases. This will allows us to employ $u$ as a highly structured signal from a point $p \in \mathbb D$ back to $\mho$.

The remainder of this section is organized as follows. We start by briefly reviewing the geometric optic solutions for the linearized equation \eqref{eq4} in Section~\ref{GOsolutions}. In Section~\ref{sourcemin} we show 
that sources supported in $\mho$ can be explicitly chosen so that they generate the geometric optics solutions for (\ref{eq4}) that pass through $\mho$, and in Section~\ref{linearmin} we use these sources to construct the three-parameter family (\ref{f_family}). 
Then, in Section~\ref{minrecovery}, we consider the interaction of $u^{(1)}$, $u^{(2)}$ and $u^{(3)}$,
as encoded by $u$ in (\ref{def_3lin}), and conclude the proof of Theorem~\ref{t0}.

The proof in the case of general, globally hyperbolic Lorentzian manifolds reflects the proof in Sections~\ref{sourcemin}--\ref{minrecovery}. Our main theorem is formulated in Section~\ref{lorentzian}, and in Section~\ref{formalgaussian} we present the Gaussian beam construction that replaces the classical geometric optics of Section~\ref{GOsolutions} in the general case. Finally in Section~\ref{mainproof} we perform the analogues of the steps in Sections~\ref{sourcemin}--\ref{minrecovery} in the general case. 

\subsection{Geometric optics}
\label{GOsolutions}

In this section we recall 
the classical construction of approximate geometric optics solutions to the wave equation 
    \begin{align}\label{eq2}
\Box u + Vu =0 \quad \text{in}\quad \R^{1+n}.
    \end{align}
The construction is based on 
the ansatz, 
\bel{goansatz1}
u_\tau(x)=e^{i\tau\xi\cdot x} a_\tau(x)=e^{i\tau\xi\cdot x}\left(\sum_{k=0}^N \frac{a_k(x)}{\tau^k}\right),
\ee 
where $\tau > 0$ is a large parameter, and $\xi \in \R^{1+n}$ and a large integer $N > 0$ are fixed.
Here the notation $\xi\cdot x=\sum_{j=0}^n \xi_j x^j$ is used.
We will view $\xi = (\xi_0, \xi') = (\xi_0, \xi_1, \dots, \xi_n)$ as a co-vector and it needs to be non-zero and light-like, that is to say
$$ |\xi_0|^2=|\xi'|^2:=|\xi_1|^2+\ldots+|\xi_n|^2.$$

We denote by $\xi^{\sharp}$ the vector version of $\xi$ with respect to the Minkowski metric. In other words,
$
\xi^{\sharp} = (-\xi_0, \xi').
$
Let $q = (t_0, x_0')$ be a point in $\R^{1+n}$. 
We will construct the amplitude functions $a_k$, $k=0,1,\dots,N$, so that $u_\tau$ satisfies (\ref{eq2}) up to a remainder term that tends to zero as $\tau \to \infty$ and that $u_\tau$ is supported near the line
    \begin{align}\label{def_gamma}
\gamma_{q,\xi}(s):=s\xi^{\sharp} + q =(-s\xi_0 + t_0, \ s\xi'+x_0'), 
\quad \forall s \in \R.
    \end{align}

As $\xi$ is light-like, it holds that 
\bel{conjugatego}
(\Box+V) (e^{i\tau\xi\cdot x}a_\tau)= e^{i\tau \xi\cdot x} ( -2i\tau\mathcal T_{\xi} a_\tau + (\Box+V) a_\tau),
\ee
where $\mathcal T_{\xi} = -\xi_0\pd_{x_0}+ \sum_{j=1}^n \xi_j \pd_{x_j}$.
The construction of the amplitudes $a_{k}$ is driven by the requirement that the expression \eqref{conjugatego} vanishes in powers of $\tau$. In particular, this imposes the transport equation 
    \begin{align}\label{transp_a0}
\mathcal T_\xi a_0=0
    \end{align}
on $a_0$.
Note that if $\omega \in \R^{1+n}$ satisfies $\xi^\sharp \cdot \omega = 0$, then for any $\chi \in \CI^1(\R)$ it holds that
    \begin{align*}
\mathcal T_{\xi} (\chi(\omega \cdot (x-q)))
= 0.
    \end{align*}

We choose $\omega_j'\in \R^n$ so that the co-vectors 
    \begin{align}\label{axes}
\frac{\xi'}{|\xi'|},\omega'_1,\ldots,\omega_{n-1}'
    \end{align}
form an orthonormal basis for $\R^{n}$ with respect to the Euclidean metric, and write $\omega_j = (0, \omega_j')$. Observe that $\xi^\sharp \cdot \omega_j = 0$ and that 
$$
\{\gamma_{q,\xi}(s)\,|\, s \in \R\}=\{x \in \R^{1+n}\,|\, \xi\cdot (x-q)=\omega_1\cdot (x-q)=\ldots=\omega_{n-1}\cdot (x-q)=0\}.
$$ 
Let $\delta > 0$ and let $\chi_\delta \in \CI_c^\infty((-\delta,\delta))$. We choose
\bel{a_0}
a_0(x)= \chi_\delta(|\xi_0|^{-1} \xi\cdot (x-q)) \prod_{j=1}^{n-1}\chi_{\delta}(\omega_j \cdot (x-q)).
\ee
Then (\ref{transp_a0}) holds and 
    \begin{align}\label{a0_supp}
\supp(a_0(t,\cdot)) \subset H(t,\delta), \quad \forall t \in \R,
    \end{align}
where $H(t,\delta)$ is the hypercube in $\R^n$ with side length $2 \delta$, centred at the unique point $x' \in \R^n$
satisfying $(t,x')=\gamma_{q,\xi}(s)$ for some $s \in \R$, and with the edges pointing to the directions (\ref{axes}).

The subsequent terms $a_{k}$ with $k \geq 1$ are chosen iteratively through the transport equations
\bel{transmin}
-2i\mathcal T_{\xi} a_{k} + (\Box+V) a_{k-1} =0.
\ee
We impose vanishing initial conditions on the hyperplane
$$
\Sigma_{q,\xi} = \{x \in \R^{1+n} \,|\, 
\xi^\sharp \cdot (x-q) = 0 \},
$$
and obtain
    \begin{align}\label{def_ak}
a_{k}(s\xi^{\sharp}+y)=\frac{1}{2i}\int_0^s ((\Box+V)a_{k-1})(\tilde{s}\xi^{\sharp} + y)\,d\tilde{s},
    \end{align}
where $s \in \R$ and $y \in \Sigma_{q,\xi}$.
It follows from (\ref{a0_supp}), via an induction, that also $\supp(a_{k}(t,\cdot)) \subset H(t,\delta)$, and therefore $u_\tau$ is supported near $\gamma_{q,\xi}$.
Moreover, the equations (\ref{transp_a0}) and (\ref{transmin}), together with (\ref{conjugatego}), imply that 
    \begin{align}\label{go_remainder}
\|(\Box+V)u_\tau\|_{\CI^{k}((0,T) \times \R^n)} \lesssim \tau^{-N+k}.
    \end{align}

\subsection{Source terms}
\label{sourcemin}

As in the previous section, let $\xi \in \R^{1+n}$ be non-zero and light-like and let $q = (t_0, x_0') \in \R^{1+n}$. We will assume, furthermore, that $q \in \mho$, and proceed to construct a source $f \in \CI^{\infty}_c(\rotom)$ such that 
the solution to the linear wave equation (\ref{eq4}) is close to the approximate geometric optics solution (\ref{goansatz1}) in a sense that will be made precise below. For this construction to work, it is necessary to require that $\delta > 0$ in (\ref{a_0}) is small enough so that 
    \begin{align}\label{delta_constraint}
H(t_0,\delta) \subset B(0,r),
    \end{align}
cf. (\ref{a0_supp}) and (\ref{def_mho}).

It follows from (\ref{delta_constraint}) and (\ref{a0_supp}) that there exists $\rho > 0$ such that 
$$
\supp(u_\tau(t,\cdot)) \subset B(0,r), \quad \forall t \in (t_0 - \rho, t_0 + \rho).
$$
We choose $\zeta_- \in \CI^\infty(\R)$ such that 
$\zeta_-(t) = 0$ for $t < t_0 - \rho$ and that $\zeta_-(t) = 1$ for $t > t_0$.
Moreover, we choose $\zeta_+ \in \CI^\infty(\R)$
such that $\zeta_+(t) = 0$ for $t > t_0 + \rho$ and that $\zeta_-(t) = 1$ for $t < t_0$.
We are now ready to define the source. Emphasizing the dependence on $\tau$, $q$ and $\xi$, we write
    \begin{align}\label{def_f_qxi}
f_{\tau,q,\xi} = \zeta_+ (\Box + V) (\zeta_- u_\tau) \in \CI^{\infty}_c(\rotom).
    \end{align}

As $\zeta_- = 1$ in the support of $1-\zeta_+$, it holds that
$$
(\Box + V) (\zeta_- u_\tau) - f_{\tau,q,\xi} = (1-\zeta_+)(\Box + V) u_\tau,
$$ 
and (\ref{go_remainder}) implies the estimate
$$
\|(\Box+V)(\zeta_- u_\tau) - f_{\tau,q,\xi} \|_{H^k((0,T)\times \R^n)} \lesssim \tau^{-N+k}.
$$ 
We write $\mathcal U_{\tau} = u$ where $u$ is the solution of the linear wave equation (\ref{eq4}) with the source $f = f_{\tau,q,\xi}$. By combining the above estimate with the usual energy estimate for the wave equation and the Sobolev embedding of $\CI((0,T) \times \R^n)$ in $H^{k+1}((0,T) \times \R^n)$ for $k > (n-1)/2$, we obtain
    \begin{align}\label{mathcalU_estimate}
\|\zeta_- u_\tau - \mathcal U_{\tau} \|_{\CI((0,T)\times \R^n)} \lesssim \tau^{-2},
    \end{align}
when $N \geq k + 2$.

As was shown right after the formulation of Theorem \ref{t0}, the source to solution map $L_V$ determines $V$ on $\mho$.
Recalling the particular form (\ref{def_ak}) of the subleading amplitude functions $a_k$, $k \geq 1$,
we see that they are determined by $L_V$ in 
$$
\supp(u_\tau) \cap (t_0 - \rho, t_0 + \rho) \times \R^{1+n}.
$$
Therefore $f_{q,\xi,\tau}$ is determined by $L_V$.

We will also need a test function whose construction differs from that of $f_{q,\xi,\tau}$ only to the extent that the roles of $\zeta_+$ and $\zeta_-$ are reversed in (\ref{def_f_qxi}). That is, we define 
    \begin{align}\label{def_f_qxi_p}
f_{\tau,q,\xi}^+ = \zeta_- (\Box + V) (\zeta_+ u_\tau) \in \CI^{\infty}_c(\rotom).
    \end{align}
Again $L_V$ determines $f_{\tau,q,\xi}^+$, and the analogue of (\ref{mathcalU_estimate}) reads
    \begin{align}\label{mathcalU_estimate_p}
\|\zeta_+ u_\tau - \mathcal U_{\tau} \|_{\CI((0,T)\times \R^n)} \lesssim \tau^{-2},
    \end{align}
where $\mathcal U_{\tau}$ is now the solution of the linear wave equation 
\bel{eq_back}
\begin{aligned}
\begin{cases}
\Box u + V u = f, 
&\forall (t,x') \in (0,T)\times \R^n,
\\
u(T,x')= 0,\, \p_t u(T,x')=0,
&\forall x' \in \R^n,
\end{cases}
    \end{aligned}
\ee 
with $f=f_{\tau,q,\xi}^+$.

\subsection{Three-parameter family of sources}
\label{linearmin}

Let $p = (t_1, x_1') \in \mathbb D \setminus \mho$.
In this section we will construct a three-parameter family of sources $f_\epsilon$ of the form (\ref{f_family}) so that the cross derivative (\ref{def_3lin}) will act as a structured signal from the point $p$ back to $\mho$. 

As $p \in \mathbb D$, there are $q^- = (t_0, x_0') \in \mho$ and non-zero, light-like $\xi^- \in \R^{1+n}$
such that $t_1 > t_0$ and $p = \gamma_{q^-,\xi^-}(s_0)$ for some $s_0 \in \R$.
Here we are using the notation (\ref{def_gamma}).
We also normalize the co-vector $\xi^- = (\xi_0^-, \dots, \xi_n^-)$ so that $\xi_0^- = -1$. Then $s_0 = t_1 - t_0$.
Using again the fact that $p \in \mathbb D$, 
we see that the point $q^+ := (t_0 + 2 s_0, x_0')$ is in $\mho$.
Moreover, setting $\xi^+ = (-1, -\xi_1^-, \dots, -\xi_n^-)$, it holds that $p = \gamma_{q^+,\xi^+}(-s_0)$, see Figure \ref{fig_3pts}.

\begin{figure}[t]
\def\svgwidth{8cm}
\begingroup%
  \makeatletter%
  \providecommand\color[2][]{%
    \errmessage{(Inkscape) Color is used for the text in Inkscape, but the package 'color.sty' is not loaded}%
    \renewcommand\color[2][]{}%
  }%
  \providecommand\transparent[1]{%
    \errmessage{(Inkscape) Transparency is used (non-zero) for the text in Inkscape, but the package 'transparent.sty' is not loaded}%
    \renewcommand\transparent[1]{}%
  }%
  \providecommand\rotatebox[2]{#2}%
  \newcommand*\fsize{\dimexpr\f@size pt\relax}%
  \newcommand*\lineheight[1]{\fontsize{\fsize}{#1\fsize}\selectfont}%
  \ifx\svgwidth\undefined%
    \setlength{\unitlength}{314.65880585bp}%
    \ifx\svgscale\undefined%
      \relax%
    \else%
      \setlength{\unitlength}{\unitlength * \real{\svgscale}}%
    \fi%
  \else%
    \setlength{\unitlength}{\svgwidth}%
  \fi%
  \global\let\svgwidth\undefined%
  \global\let\svgscale\undefined%
  \makeatother%
  \begin{picture}(1,0.88160488)%
    \lineheight{1}%
    \setlength\tabcolsep{0pt}%
    \put(0,0){\includegraphics[width=\unitlength,page=1]{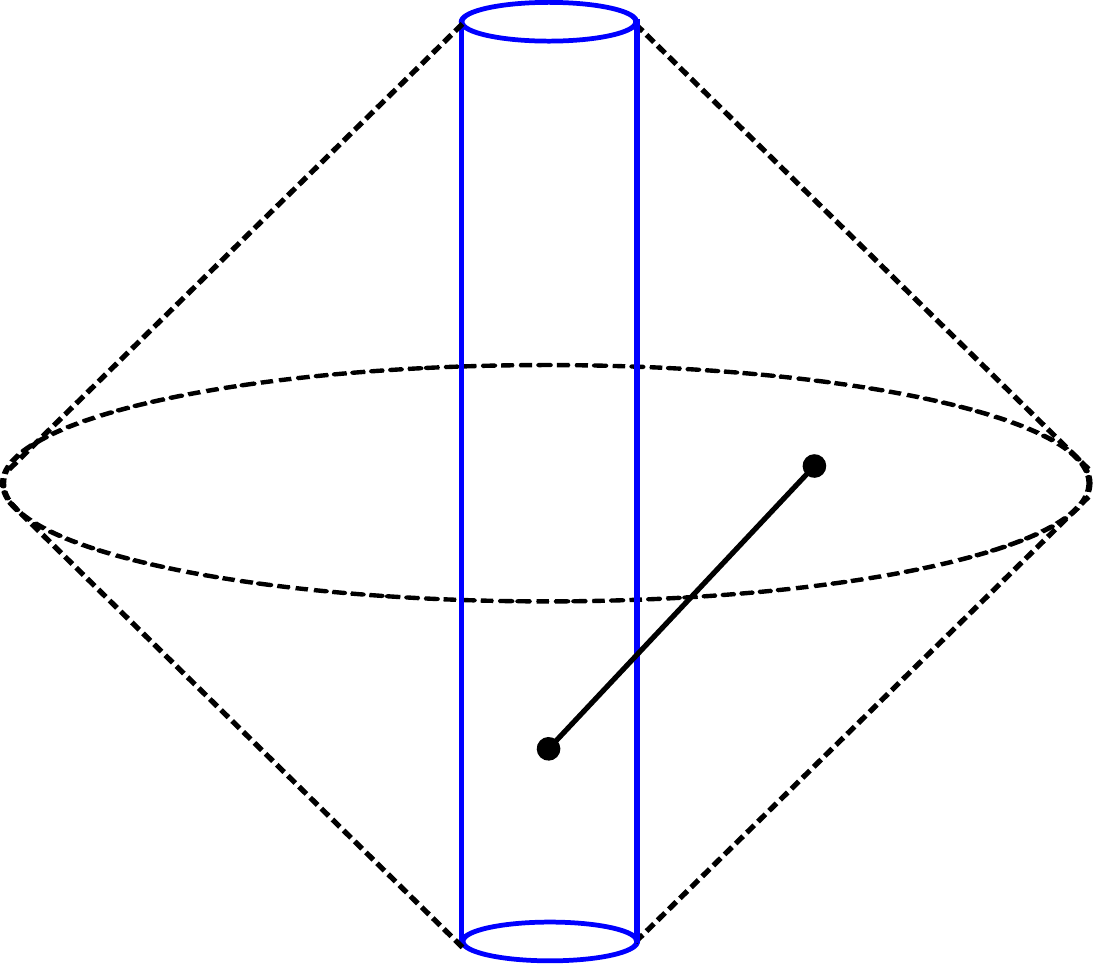}}%
    \put(0.44997496,0.12123226){\color[rgb]{0,0,0}\makebox(0,0)[lt]{\lineheight{0}\smash{\begin{tabular}[t]{l}$q^-$\end{tabular}}}}%
    \put(0.78128315,0.43202768){\color[rgb]{0,0,0}\makebox(0,0)[lt]{\lineheight{0}\smash{\begin{tabular}[t]{l}$p$\end{tabular}}}}%
    \put(0,0){\includegraphics[width=\unitlength,page=2]{cylinder_3pts.pdf}}%
    \put(0.44859173,0.73754796){\color[rgb]{0,0,0}\makebox(0,0)[lt]{\lineheight{0}\smash{\begin{tabular}[t]{l}$q^+$\end{tabular}}}}%
  \end{picture}%
\endgroup%

\caption{
The causal diamond $\mathbb D$ is drawn with dashed curves, and $\mho$ is the blue cylinder. The black line segments are on $\gamma_{q^\pm, \xi^\pm}$, joining $q^\pm$ to p. We write also $q^+ = q^{(0)}$ and $q^- = q^{(1)}$, and denote by $q^{(2)}$ and $q^{(3)}$ two perturbations of $q^{(1)}$.
}\label{fig_3pts}
\end{figure}

We will next choose two light-like co-vectors, that are  small perturbations of $\xi^-$, in such a way that $\xi^+$ can be written as a linear combination of $\xi^-$ and the two perturbations. 
We may rotate the coordinate system in the spatial variables $(x^1, \dots, x^n) \in \R^n$, so that in the rotated coordinates, $\xi^+$ and $\xi^-$
are represented by
    \begin{align}\label{def_xi01}
\tilde{\xi}^{(0)}=(-1,-1,\underbrace{0,\ldots,0}_{n-1\,\text{times}}),\quad \tilde{\xi}^{(1)}=(-1,1,\underbrace{0,\ldots,0}_{n-1\,\text{times}}),
    \end{align}
respectively. We define for small $\sigma > 0$
    \begin{align}\label{def_xi23}
\tilde{\xi}^{(2)}=(-1,\sqrt{1-{\sigma}^2},{\sigma},\underbrace{0,\ldots,0}_{n-2\,\text{times}}),\quad \tilde{\xi}^{(3)}=(-1,\sqrt{1-{\sigma}^2},-{\sigma},\underbrace{0,\ldots,0}_{n-2\,\text{times}}),
    \end{align}
and have
\bel{linear dependence}
\sigma^2 \tilde{\xi}^{(0)} + 
\kappa_1 \tilde{\xi}^{(1)} 
+\kappa_2 \tilde{\xi}^{(2)} 
+\kappa_3 \tilde{\xi}^{(3)} = 0    ,
\ee 
where 
    \begin{align}\label{def_kappa}
\kappa_1 = 2(1+\sqrt{1-\sigma^2}) - \sigma^2, \quad \kappa_2 = \kappa_3 = -1 - \sqrt{1-\sigma^2}.
    \end{align}
Finally, we define the co-vector $\xi^{(0)}$ 
to be the representation of $\sigma^2 \tilde{\xi}^{(0)}$, after passing back to the original coordinate system, and $\xi^{(j)}$ for $j=1,2,3$
to be the analogous representations of the co-vectors $\kappa_j \tilde{\xi}^{(j)}$.
Then $\xi^{(0)} = \sigma^2 \xi^+$, $\xi^{(1)} = \kappa_1 \xi^-$ and both $\xi^{(2)}$ and $\xi^{(3)}$
are small perturbations of $-2 \xi^-$.
Note also that $\kappa_1$ is close to $4$.

Define 
    \begin{align}\label{def_qs}
q^{(0)} = \gamma_{p,\xi^{(0)}}(s_0/\sigma^2), \quad 
q^{(j)} = \gamma_{p,\xi^{(j)}}(-s_0/\kappa_j), \quad \text{for $j=1,2,3$}.
    \end{align}
Then $q^{(0)} = q^+$, $q^{(1)} = q^-$ and $q^{(2)}, q^{(3)} \in \mho$ for small enough $\sigma > 0$.
We are now ready to define the following three parameter family of sources 
    \begin{align}\label{f_family_final}
f_{\tau,\epsilon}=\epsilon_1 f_{\tau,q^{(1)},\xi^{(1)}}+\epsilon_2 f_{\tau,q^{(2)},\xi^{(2)}}+\epsilon_3 f_{\tau,q^{(3)},\xi^{(3)}},
    \end{align}
where each $f_{\tau,q^{(j)},\xi^{(j)}}$ is defined by (\ref{def_f_qxi}).

\subsection{Recovery of $V$}
\label{minrecovery}

Let $f_{\epsilon, \tau}$ be as in (\ref{f_family_final}).
Recall that $p$ is an arbitrary point in $\mathbb D \setminus \mho$. In this section, we will prove Theorem \ref{t0} by showing that $V(p)$ is determined by $L_V$. 

For a fixed $\tau > 0$ and small enough $\epsilon_j > 0$
it holds that $f_{\epsilon, \tau} \in \mathscr C$,
and we let $u_{\epsilon, \tau}$ denote the unique solution to  (\ref{pf1}) subject to this source term.
We write $\mathcal U^{(j)}_\tau = \p_{\epsilon_j} u_{\epsilon, \tau}|_{\epsilon = 0}$ for $j=1,2,3$. Then the function $\mathcal U^{(j)}_\tau$ is close, in the sense of the estimate (\ref{mathcalU_estimate}), to the approximate geometric optics solution of the form (\ref{goansatz1}) supported near the line $\gamma_{q^{(j)}, \xi^{(j)}}$.
Moreover, it follows from (\ref{interaction}) that the function
$$
v_\tau = -\frac{1}{6}\frac{\pd^3 u_{\epsilon,\tau}}{\pd \epsilon_1\pd \epsilon_2\pd\epsilon_3}|_{\epsilon=0} 
$$
satisfies the equation
\bel{eq5}
\begin{aligned}
\begin{cases}
\Box v_{\tau} + V v_{\tau}=\mathcal U^{(1)}_\tau \mathcal U_\tau^{(2)}\mathcal U^{(3)}_\tau, 
&\forall (t,x') \in (0,T)\times \R^n,
\\
v_{\tau}(0,x')= 0,\, \p_t v_{\tau}(0,x')=0,
&\forall x' \in \R^n.
\end{cases}
    \end{aligned}
\ee

Recall that $q^{(0)}$ is defined by (\ref{def_qs})
and, modulo a rotation and the rescaling by $\sigma^2$, $\xi^{(0)}$ is defined by (\ref{def_xi01}).
Consider the test function $f_{\tau, q^{(0)}, \xi^{(0)}}^+$ defined by (\ref{def_f_qxi_p}) and denote 
by $\mathcal U^{(0)}_\tau$ the solution of (\ref{eq_back}) with $f = f_{\tau, q^{(0)}, \xi^{(0)}}^+$. As $f_{\tau, q^{(0)}, \xi^{(0)}}^+$ is supported in $\mho$, there holds 
    \begin{align*}
-\frac 1 6 \int_{(0,T)\times \R^n} \p_{\epsilon_1} \p_{\epsilon_2} \p_{\epsilon_3} L_V (f_{\epsilon,\tau})|_{\epsilon = 0} \, f_{\tau, q^{(0)}, \xi^{(0)}}^+ \,dx = \int_{(0,T)\times \R^n} v_\tau \, (\Box + V) \mathcal U^{(0)}_\tau \, dx.
    \end{align*}
After integrating by parts twice, we see that $L_V$ determines the integral
$$
\mathcal I = \int_{(0,T)\times \R^n} \mathcal U^{(0)}_\tau \mathcal U^{(1)}_\tau \mathcal U^{(2)}_\tau \mathcal U^{(3)}_\tau \, dx.
$$

It follows from (\ref{mathcalU_estimate}) and (\ref{mathcalU_estimate_p}) that $\mathcal U^{(j)}_\tau$, with $j=0,1,2,3$, coincides with the corresponding approximate geometric optics solution up to an error of order $\tau^{-2}$.
We denote by $a_k^{(j)}$ the corresponding amplitude functions. 
We will expand $\mathcal I$ in the powers of $\tau$, 
$$
\mathcal I = \mathcal I_0 + \mathcal I_{-1} \tau^{-1} + \mathcal O(\tau^{-2}).
$$
Observe that as $a_0^{(0)}$ is supported near $\gamma_{q^{(0)}, \xi^{(0)}}$ and as $a_0^{(1)}$ is supported near $\gamma_{q^{(1)}, \xi^{(1)}}$, their product is supported near the point $p$, cf. (\ref{def_qs}). For this reason the cut-off functions $\zeta_-$ and $\zeta_+$ in (\ref{mathcalU_estimate}) and (\ref{mathcalU_estimate_p}) do not appear in $\mathcal I_{0}$ and $\mathcal I_{1}$.
Moreover, (\ref{linear dependence}) implies that the phases of the four approximate geometric optics solutions cancel each other under the product in $\mathcal I$. 
Therefore 
$$
\mathcal I_0 = \int_{(0,T)\times \R^n} 
a_0^{(0)}a_0^{(1)}a_0^{(2)}a_0^{(3)} dx,
$$
and analogously, 
\[
\begin{aligned}
\mathcal I_{-1} =
\sum_{|e| = 1}
\int_{(0,T)\times \R^n} a_{e_0}^{(0)} a_{e_1}^{(1)}  a_{e_2}^{(2)}  a_{e_3}^{(3)}\,dx,
\end{aligned}
\]
where $e = (e_0, e_1, e_2, e_3) \in \{0,1,\dots\}^4$ is a multi-index.
In particular, $L_V$ determines $\mathcal I_{-1}$.

Recall that $a_1^{(j)}$ is of the form 
$a_1^{(j)} = b_1^{(j)} + c_1^{(j)}$, where 
for $s \in \R$ and $y \in \Sigma_{q^{(j)}, \xi^{(j)}}$,
    \begin{align}\label{def_b1c1}
b_{1}^{(j)}(s\xi^{(j)\sharp}+y) &= \frac{1}{2i}\int_0^s (\Box a_0^{(j)})(\tilde{s}\xi^{(j)\sharp}+y)\,d\tilde s,
\\\notag
c_{1}^{(j)}(s\xi^{(j)\sharp}+y) &= \frac{1}{2i}\int_0^s (V a_0^{(j)})(\tilde{s}\xi^{(j)\sharp}+y)\,d\tilde s,
    \end{align}
cf. (\ref{def_ak}).
As $a_0^{(j)}$ is independent from $V$, so is $b_1^{(j)}$. Therefore $L_V$ determines the quantity

\[
\begin{aligned}
\mathcal J =&\int_{(0,T)\times \R^n} c_{1}^{(0)} a_0^{(1)}  a_0^{(2)}  a_0^{(3)}\,dx+\int_{(0,T)\times \R^n} a_0^{(0)} {c}_{1}^{(1)}  a_0^{(2)}  a_0^{(3)}\,dx\\
&\quad+\int_{(0,T)\times \R^n} a_0^{(0)} a_0^{(1)}  {c}_{1}^{(2)}  a_0^{(3)}\,dx+\int_{(0,T)\times \R^n} a_0^{(0)} a_0^{(1)}  a_0^{(2)}  {c}_{1}^{(3)}\,dx.
\end{aligned}
\]

For each $j=0,1,2,3$, we let the cut-off function $\chi_\delta$ in the definition of the leading amplitude (\ref{a_0}) to converge to the indicator function of the interval $(-\delta, \delta)$. Then 
$\mathcal J$ converges to 
    \begin{align*}
\mathcal J_\delta = \sum_{j=0}^3 \int_{P_\delta} c^{(j)} dx, \quad c_{1}^{(j)}(s\xi^{(j)\sharp}+y) &= \frac{1}{2i}\int_0^s V(\tilde{s}\xi^{(j)\sharp}+y)\,d\tilde s,
    \end{align*}
where $P_{\delta}$ is a small polygonal neighbourhood of the point $p$. Moreover, 
$$
\lim_{\delta \to 0} \frac {J_\delta}{|P_\delta|} = \sum_{j=0}^3 c^{(j)}(p).
$$
Due to (\ref{def_qs}) we have 
$$
2 i c^{(0)}(p) 
= 
\int_0^{-s_0/\sigma^2} V(\tilde{s}\xi^{(0)\sharp}+q^{(0)})\,d \tilde s
= 
\sigma^{-2}\int_0^{s_0} 
V(-s\xi^{+\sharp}+q^{(0)})\,d s,
$$
and 
$$
2 i c^{(j)}(p) 
= 
\int_0^{s_0/\kappa_j} V(\tilde{s}\xi^{(j)\sharp}+q^{(j)})\,d \tilde s, \quad \forall j=1,2,3.
$$
Recalling the explicit dependence (\ref{def_kappa}) of $\kappa_j$ on $\sigma$, we see that 
$$
2i \lim_{\sigma \to 0} \sigma^2 \sum_{j=0}^3 c^{(j)}(p) = \int_0^{s_0} 
V(-s\xi^{+\sharp}+q^{(0)})\,d s.
$$
Differentiating with respect to $s_0$ shows that $V(p)$ is determined by $L_V$.
This concludes the proof of Theorem \ref{t0}.

\section{The case of globally hyperbolic Lorentzian geometries}
\label{lorentzian}
The rest of this paper is concerned with generalizing 
Theorem \ref{t0} to more general 
Lorentzian geomteries. We begin by reviewing some key concepts from Lorentzian geometry, 
following the notations and definitions in \cite{O'neil}.

Let us consider a smooth $1+n$ dimensional Lorentzian manifold $(\M,\g)$, with $n \geq 2$.
The metric tensor $\g$ is taken to have the signature  $(-,+,\ldots,+)$, and writing $\langle v,w\rangle_{\g}=\sum_{i,j=0}^n \g_{ij}v^iw^j$
for vectors $v,w \in T_p \M$, $p \in \M$,
we recall that 
\begin{itemize}
\item[]{$v$ is {\em time-like} (resp. {\em space-like}) if $\langle v,v\rangle_{\g}<0$ (resp. $>0$),}
\item[]{$v$ is {\em light-like} (or {\em null}) if $\langle v,w\rangle_{\g}=0$.}
\end{itemize} 
The manifold $\M$ is assumed to be time-orientable
in the sense that there exists a globally defined, smooth time-like vector field $Z$ on $M$.
A curve $\alpha$ on $\M$ is called {\em causal} if its tangent vector $\dot \alpha$ is time-like or light-like for all points on the curve,
and a causal curve is {\em future-pointing} if $\langle \dot \alpha,Z\rangle_{\g}<0$.
We write $p \preccurlyeq q$ if there exists a causal future-pointing curve from $p$ to $q$ (the case  $p=q$ is allowed). The generalizations of (\ref{future_min}) and (\ref{past_min}) read as 
$$
\mathscr J_{+}(p)=\{q \in \M\,|\, p \preccurlyeq q\}, \quad
\mathscr J_{-}(p)=\{q \in \M\,|\, q \preccurlyeq p\},
\qquad \forall p \in \M.
$$

We will make the typical assumption that $(\M,\g)$ is {\em globally hyperbolic}. This guarantees that the natural, linear wave equation is well-posed on $\M$. There are several equivalent characterizations of global hyperbolicity, and we recall the one in \cite{BS}: there are no closed causal paths on $\M$ and the intersection 
$\mathscr J_{+}(p) \cap \mathscr J_{-}(q)$ is compact for any pair of points $p, q \in \M$.

Global hyperbolicity implies that there is a global splitting in ``time'' and ``space'' in the sense that
$(\M,\g)$ is isometric to $\R\times M$ with metric 
    \begin{align}\label{splitting}
\g=-\beta(t,x')\,dt\otimes\,dt+g(t,x'), \quad \forall t \in \R,\ x' \in M,
    \end{align}
where $\beta$ is a smooth positive function and $g$ is a Riemannian metric on the $n$ dimensional manifold $M$ smoothly depending on the parameter $t$. Moreover, each set $\{t\}\times M$ is a Cauchy hypersurface in $\M$, that is to say, any causal curve intersects it at most once. 

To simplify the notation, we fix a global splitting of the form (\ref{splitting}) and use it throughout the rest of the paper. Analogously to (\ref{def_mho}), we set 
$$
\rotom=(0,T)\times \mathcal O,
$$
where $T> 0$ and $\mathcal O$ is an open, bounded set in $M$.
Analogously with the Minkowski case, we write again $\mathscr J_{\pm}(\rotom)=\bigcup_{q \in \rotom} \mathscr J_{\pm}(q)$ and define
$$
\mathbb D = \mathscr J_+(\rotom) \cap \mathscr J_-(\rotom).
$$

We will denote by $\nabla^{\g}$ the Levi-Civita connection on $\M$ and let $\div_{\g}$ denote the divergence operator on $\M$. The wave (or Laplace-Beltrami) operator $\Box_{\g}$ acting on smooth functions $\CI^{\infty}(\M)$ is subsequently defined through $\Box_{\g} u= -\div_{\g}\nabla^{\g} u$. In local coordinates $x=(t:=x^0,x^1,\ldots,x^n)=(t,x')$, we have
$$ \Box_{\g} u= -\sum_{i,j=0}^n |\g|^{-\frac{1}{2}}\frac{\pd}{\pd x_{i}}\left(|\g|^{\frac{1}{2}}\g^{ij}\frac{\pd u}{\pd x_j} \right).$$

We consider the following Cauchy problem:
\bel{pf}
\begin{aligned}
\begin{cases}
\Box_{\g} u + V u +u^3=f, 
&\forall x \in (0,T)\times M,
\\
u(0,x')= 0,\, \p_t u(0,x')=0,
&\forall x' \in M,
\end{cases}
    \end{aligned}
\ee
where $V \in \CI^{\infty}(\M)$. This equation is well-posed for $f \in \mathscr{C}$, where $\mathscr{C}$ denotes a neighborhood of origin in the $\CI^{\kappa}_c(\rotom)$ topology with $\kappa$ a sufficiently large but explicit constant. In other words, for each $f \in \mathscr{C}$, there exists a unique small solution $u \in H^1((0,T) \times M)$ to \eqref{pf}.

We define the source to solution map $L_V$ associated to the Cauchy problem \eqref{pf} through
\bel{sotsol}
L_V f:= u\,|_{\rotom}, \quad \forall f \in \mathscr{C}.
\ee
As in the Minkowski case,
we are interested in the problem of determining the unknown potential function $V$ on the causal diamond $\mathbb D$, given the map $L_V$. We have the following result.

\begin{theorem}
\label{t1}
The source to solution map $L_V$ determines $V$ on $\mathbb D$ in the sense that 
$$L_{V_1}\,f=L_{V_2}\,f \quad \forall \,f \in \mathscr{C} \quad \implies V_1=V_2 \quad \text{on}\quad \mathbb D.$$ 
\end{theorem}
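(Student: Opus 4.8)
The plan is to transcribe the proof of Theorem \ref{t0}, replacing the plane-wave geometric optics of Section \ref{GOsolutions} by Gaussian beams, since on a curved $(\M,\g)$ no global linear phase $\xi\cdot x$ is available. First I would fix a null geodesic $\gamma$ and posit, in a tubular neighbourhood of $\gamma$, the ansatz $u_\tau=e^{i\tau\Theta}a_\tau$ with a complex phase $\Theta$ that vanishes on $\gamma$, has $d\Theta$ null there, and has $\Im(\mathrm{Hess}\,\Theta)$ positive definite transverse to $\dot\gamma$. Inserting this into $\Box_\g+V$ and matching powers of $\tau$ yields the eikonal equation $\langle d\Theta,d\Theta\rangle_\g=0$ to second order on $\gamma$ — solved by propagating a matrix Riccati equation for the transverse Hessian along $\gamma$ — together with transport equations for the amplitudes $a_k$ that are the curved analogues of \eqref{transp_a0} and \eqref{transmin}. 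This produces $u_\tau$ with $(\Box_\g+V)u_\tau=\mathcal O(\tau^{-N})$ that, as $\tau\to\infty$, concentrates on $\gamma$ as a Gaussian of transverse width $\tau^{-1/2}$; this concentration plays the role of the compact support \eqref{a0_supp} in the Minkowski argument.

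Next I would repeat Sections \ref{sourcemin} and \ref{linearmin}. Cutting $u_\tau$ in time and setting $f_{\tau,q,\xi}=\zeta_+(\Box_\g+V)(\zeta_-u_\tau)$ gives a source supported in $\rotom$ whose forward solution approximates $\zeta_-u_\tau$ to order $\tau^{-2}$ in $\CI$, by the energy estimate for $\Box_\g$ on the globally hyperbolic slab, and the backward test source $f^+_{\tau,q,\xi}$ is built symmetrically; as before these sources are determined by $L_V$ because $L_V$ fixes $V$, hence the subleading amplitudes, on $\rotom$. For the geometric construction, given $p\in\mathbb D\setminus\rotom$ the definition of $\mathbb D$ lets me pick a null geodesic through $p$ meeting $\rotom$ both to the past (at $q^-$) and to the future (at $q^+$). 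Since $\g_p$ is a Minkowski inner product on $T_p\M$, the linear algebra of \eqref{def_xi01}--\eqref{def_kappa} transfers verbatim to null covectors at $p$: I take $\xi^{(1)}$ toward $q^-$, two small perturbations $\xi^{(2)},\xi^{(3)}$, and $\xi^{(0)}$ toward $q^+$, normalised so that $\sum_j\xi^{(j)}=0$ as in \eqref{linear dependence}; openness of $\rotom$ and continuity of the geodesic flow guarantee the perturbed geodesics still reach $\rotom$ for small $\sigma$. This yields the three-parameter family and the test beam $\mathcal U^{(0)}_\tau$.

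The recovery step then mirrors Section \ref{minrecovery}. The triple derivative $v_\tau$ solves $(\Box_\g+V)v_\tau=\mathcal U^{(1)}_\tau\mathcal U^{(2)}_\tau\mathcal U^{(3)}_\tau$, and pairing with $\mathcal U^{(0)}_\tau$ shows that $L_V$ determines $\mathcal I=\int_\M \mathcal U^{(0)}_\tau\mathcal U^{(1)}_\tau\mathcal U^{(2)}_\tau\mathcal U^{(3)}_\tau\,dV_\g$. Because the four beams meet only at $p$, Gaussian concentration (a transverse stationary-phase computation) localizes $\mathcal I$ there, and the relation $\sum_j\xi^{(j)}=0$ makes the total real phase critical at $p$ so that the expansion $\mathcal I=\mathcal I_0+\tau^{-1}\mathcal I_{-1}+\mathcal O(\tau^{-2})$ has surviving first two terms. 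The leading $\mathcal I_0$ is independent of $V$, while the $V$-dependent part of $\mathcal I_{-1}$ is the curved analogue of the $c_1^{(j)}$ terms \eqref{def_b1c1}, each equal to an integral of $V$ along one of the four null geodesics. Normalising by the Gaussian amplitude and sending the beam width to zero produces $\sum_j\int V\,ds$ along the geodesics through $p$; rescaling by $\sigma^2$ isolates the single integral of $V$ from $q^+$ to $p$, and differentiating in the geodesic parameter recovers $V(p)$. As $p\in\mathbb D\setminus\rotom$ is arbitrary, this gives $V$ on $\mathbb D$.

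The main obstacle is everything caused by trading the exact global phase and compact support of the Minkowski beams for the only locally defined, complex Gaussian-beam phase. Two issues need genuine care: solving the eikonal--Riccati system along the entire geodesic segment inside $\mathbb D$ without the imaginary part of the transverse Hessian degenerating, which is exactly where global hyperbolicity and the compactness of $\mathscr J_+(\rotom)\cap\mathscr J_-(\rotom)$ enter; and justifying that the fourfold product localizes at $p$ with the phases cancelling to the required order, since the cancellation is now only infinitesimal (the phase gradients, not the phases, sum to zero at $p$). Controlling the resulting stationary-phase asymptotics to two orders in $\tau$, uniformly as $\sigma\to0$ and as the transverse profile sharpens, is the technical heart; the remaining steps are a faithful transcription of Sections \ref{sourcemin}--\ref{minrecovery}.
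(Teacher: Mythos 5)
Your overall strategy coincides with the paper's: Gaussian beams replace the plane-wave phases, the sources and the three-parameter family are built exactly as in Sections \ref{sourcemin}--\ref{linearmin}, the fourfold product is analysed by stationary phase, and $V(p)$ is extracted by rescaling and differentiating along the geodesic. The genuine gap is in the geometric step. You assert that ``the definition of $\mathbb D$ lets me pick a null geodesic through $p$ meeting $\rotom$ both to the past (at $q^-$) and to the future (at $q^+$)''. This is false even in Minkowski space: if $p=(t_1,x_1')\in\mathbb D\setminus\rotom$ then $|x_1'|\geq r$, so the set of parameters $s$ for which the spatial projection $x_1'+sv$ of a null line through $p$ lies in the convex ball $B(0,r)$ is an interval not containing $s=0$; hence any null line through $p$ meets $\R\times B(0,r)$, and a fortiori $\rotom$, only in the past of $p$ or only in the future, never both. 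More generally, $p\in\mathbb D$ only supplies \emph{causal curves} between $p$ and $\rotom$, not null geodesics. What the proof actually needs is the content of Lemma \ref{generic} (essentially \cite[Lemma 2.3]{KLUI}): two \emph{distinct} null geodesics $\gamma^{\pm}$ through $p$, meeting $\rotom$ at $q^+$ in the future and at $q^-$ in the past, with linearly independent tangents at $p$, and intersecting each other \emph{only} at $p$ in the slab $[t^-,t^+]\times M$. Its proof is not soft: one takes earliest observation times, uses continuity of the time-separation function (this is where global hyperbolicity enters) to get $\tau(q^-,p)=\tau(p,q^+)=0$, and invokes \cite[Proposition 10.46]{O'neil} to upgrade the connecting causal curves to null pregeodesics and to exclude further intersections. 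Without this lemma your argument fails at three points: if the two tangents at $p$ were parallel (the single-geodesic picture), then $b(\sigma)=0$ in \eqref{linear dependence f} and the coefficients $\kappa_j$ degenerate; the positivity $\Im S\geq m\,\bar d(\cdot,p)^2$ in Lemma \ref{stationaryphase} rests on the linear independence of $\xi^{(0)\sharp}$ and $\xi^{(1)\sharp}$ and is lost; and the localization of $\mathcal I$ at $p$ --- which you invoke verbatim (``because the four beams meet only at $p$'') --- is precisely the unproved statement, since any other intersection of the beams inside $(0,T)\times M$ contributes a competing term to the expansion of $\mathcal I$.

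Two further points. First, since $\kappa_2,\kappa_3<0$, the beams along $\gamma^{(2)},\gamma^{(3)}$ cannot be taken of the form $e^{i\kappa_j\tau\phi^{(j)}}a^{(j)}$: one would then have $\Im(\kappa_j\phi^{(j)})\leq 0$, so these factors grow exponentially off the geodesics, destroying both Lemma \ref{lemerror} and the bound $\Im S\geq 0$ on which the stationary phase rests. The paper uses the \emph{conjugated} beams $e^{i\kappa_j\tau\bar\phi^{(j)}}\bar a^{(j)}$ for $j=2,3$, cf. \eqref{gb_tilde} and \eqref{gaussianf}, so that $\bar V$ rather than $V$ enters the corresponding first-order amplitudes; your sketch ignores this sign issue. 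Second, your diagnosis that global hyperbolicity is needed to prevent $\Im H$ from degenerating along long geodesic segments is misplaced: Lemma \ref{ricA} yields $\Im H>0$ on any finite interval with no geometric input. Global hyperbolicity is used instead for the splitting \eqref{splitting}, for the injectivity of the Fermi chart (null geodesics do not self-intersect), and above all in the causality arguments behind Lemma \ref{generic}.
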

\bigskip

The rest of this paper is concerned with the proof of this theorem. This is organized as follows. In Section~\ref{formalgaussian} we recall the construction of Gaussian beams which generalizes the Minkowski geometric optic construction to the globally hyperbolic manifold $(\M,\g)$. Section~\ref{mainproof} begins with the construction of the appropriate source terms that produce these Gaussian beams. Next, in Section~\ref{waveinterac2} we consider the geometry of null geodesics and study the intersections of such curves. Finally, in Section~\ref{waveinterac3} we study the interaction of waves corresponding to a three-fold linearization of the semi-linear equation and derive uniqueness of $V$.

\section{Gaussian beams}
\label{formalgaussian}

This section is concerned with the review of (formal) Gaussian beams. Gaussian beams are a classical construction and we refer to \cite{KKL} for a construction in the case of a static Lorentzian manifold which is a product of a time interval with a Riemannian manifold. As the geometry here is not stationary (in the sense that $g,\beta$ depend on $t$), we will explicitly present the construction. 

\subsection{Fermi Coordinates }

In this section, we recall Fermi coordinates (or geodesic coordinates) near a null geodesic $\gamma$, that is, a geodesic with light-like tangent vector $\dot \gamma$. For similar constructions in the context of stationary Lorentzian geometries or Riemannian geometries with a product structure, we refer the reader to \cite{FIKO} and \cite{DKLS} respectively. 


\begin{lemma}[Fermi Coordinates]
\label{fermi}
Let $\delta > 0$, $a < b$ and let $\gamma: (a-\delta,b+\delta) \to \M$ be a null geodesic on $\M$. There exists a coordinate neighborhood  $(U,\Phi)$ of $\gamma([a,b])$, with the coordinates denoted by $(z^0:=s,z^1,\ldots,z^n)$, such that:
\begin{itemize}
\item[(i)] {$\Phi(U)=(a-\delta',b+\delta') \times B(0,\delta')$ where $B(0,\delta')$ denotes a ball in $\mathbb{R}^{n}$ with a small radius $\delta' > 0$.}
\item[(ii)]{$\Phi(\gamma(s))=(s,\underbrace{0,\ldots,0}_{n \hspace{1mm}\text{times}})$}. 
\end{itemize}
Moreover, the metric tensor $\g$ satisfies in this coordinate system  
    \begin{align}\label{g_on_gamma}
\g|_\gamma = 2ds\otimes dz^1+ \sum_{\alpha=2}^n \,dz^\alpha\otimes \,dz^\alpha,
    \end{align}
and $\partial_i \g_{jk}|_\gamma = 0$ for $i,j,k=0,\ldots,n$. Here, $|_\gamma$ denotes the restriction on the curve $\gamma$.
\end{lemma}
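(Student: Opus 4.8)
The plan is to build the coordinates by combining a parallel-transported null frame along $\gamma$ with the transverse exponential map, which is the standard recipe for Fermi (normal) coordinates adapted to a geodesic. First I would use that $\gamma$ is affinely parametrized, so that $e_0(s) := \dot\gamma(s)$ is parallel along $\gamma$ and null. At the base point $\gamma(a)$ I would complete $e_0$ to a pseudo-orthonormal null frame $\{e_0, e_1, \ldots, e_n\}$ satisfying $\langle e_0, e_0\rangle_{\g} = \langle e_1, e_1\rangle_{\g} = 0$, $\langle e_0, e_1\rangle_{\g} = 1$, $\langle e_0, e_\alpha\rangle_{\g} = \langle e_1, e_\alpha\rangle_{\g} = 0$ and $\langle e_\alpha, e_\beta\rangle_{\g} = \delta_{\alpha\beta}$ for $\alpha, \beta \geq 2$; such a frame exists because the span of $e_0$ and a suitable dual null vector $e_1$ is a nondegenerate Lorentzian plane whose orthogonal complement is spacelike of dimension $n-1$. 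Parallel transporting $e_1, \ldots, e_n$ along $\gamma$ preserves all these inner products by metric compatibility of $\nabla^{\g}$, while $e_0 = \dot\gamma$ stays parallel because $\gamma$ is a geodesic.

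Next I would define the candidate chart by
$$
F(s, z^1, \ldots, z^n) = \exp_{\gamma(s)}\Big( \sum_{i=1}^n z^i e_i(s) \Big),
$$
and take $\Phi = F^{-1}$. At $z = 0$ its differential sends $\partial_s \mapsto e_0(s)$ and $\partial_{z^i} \mapsto e_i(s)$, hence is a linear isomorphism; the inverse function theorem together with the compactness of $[a,b]$ then yields a genuine chart on a tube $U$ around $\gamma([a,b])$ after shrinking the transverse radius to some $\delta' > 0$. This gives (i), (ii) and $F(s,0) = \gamma(s)$. On $\gamma$ the coordinate vector fields coincide with the parallel frame, so $\g_{ij}|_\gamma = \langle e_i, e_j\rangle_{\g}$ is exactly the constant matrix encoding $2\,ds\otimes dz^1 + \sum_{\alpha=2}^n dz^\alpha\otimes dz^\alpha$, which is \eqref{g_on_gamma}.

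The substantive point is $\partial_i \g_{jk}|_\gamma = 0$, which I would obtain by showing that every Christoffel symbol vanishes on $\gamma$ and then using the identity $\partial_i \g_{jk} = \g_{lk}\Gamma^l_{ij} + \g_{jl}\Gamma^l_{ik}$. There are three cases. The symbol $\Gamma^k_{00}|_\gamma$ vanishes because $\gamma(s) = (s,0,\ldots,0)$ is a geodesic and hence has zero coordinate acceleration. The symbols $\Gamma^k_{0j}|_\gamma$ with $j \geq 1$ vanish because $\nabla_{\dot\gamma} e_j = 0$ is precisely parallel transport of the frame. Finally, for $i,j \geq 1$ I would use that for each fixed $s$ the curve $t \mapsto F(s, tu)$ is a radial geodesic with vanishing coordinate acceleration, so the geodesic equation forces $\sum_{i,j \geq 1} \Gamma^k_{ij}|_\gamma u^i u^j = 0$ for all $u$; polarization and the torsion-free symmetry $\Gamma^k_{ij} = \Gamma^k_{ji}$ then give $\Gamma^k_{ij}|_\gamma = 0$ individually. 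The lone remaining derivative is $\partial_0 \g_{jk}|_\gamma = \tfrac{d}{ds}\langle e_j, e_k\rangle_{\g} = 0$, again by the parallel-frame property.

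I expect the main obstacle to be organizing the transverse derivative computation cleanly: the exponential map only annihilates the purely radial Christoffel combination, so one must argue by polarization, and invoke the symmetry of the Levi-Civita connection, to conclude that each individual $\Gamma^k_{ij}|_\gamma$ vanishes rather than merely its contraction against $u^i u^j$. A secondary technical point is upgrading the local diffeomorphism near each $(s,0)$ to an honest chart on a full tube about $\gamma([a,b])$, which relies on $\gamma|_{[a,b]}$ being an embedded segment and a routine compactness-and-shrinking argument.
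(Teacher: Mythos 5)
Your proposal is correct and follows essentially the same route as the paper: a parallel-transported pseudo-orthonormal null frame along $\gamma$, the transverse exponential map plus the inverse function theorem to produce the tubular chart, and the vanishing of all Christoffel symbols on $\gamma$ (radial geodesics with polarization for the spatial indices, parallel transport for the indices involving $s$, and torsion-freeness) to conclude $\partial_i\g_{jk}|_\gamma=0$ via metric compatibility. The only differences are cosmetic — your normalization $\langle e_0,e_1\rangle_{\g}=1$ versus the paper's $\g(e_0,e_1)=2$, and the paper explicitly attributes the non-self-intersection of $\gamma$ (needed for injectivity of the chart) to global hyperbolicity.
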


\begin{proof}
Write $q=\gamma(a-\delta)$ and $e_0 = \dot{\gamma}(a-\delta)$. Note that $\g(e_0,e_0)=0$. 
There are non-zero $c_0 \in \R$ and $e'_0 \in T_q M$
such that $e_0=c_0\pd_t +e'_0$. We set 
$$
e_1=\frac{1}{\beta c_0^2}(-c_0\pd_t+e'_0).
$$
Then $\g(e_1,e_1)=0$ and $\g(e_0,e_1)=2$. 
Finally, we choose vectors $e_2,\dots e_n \in T_q \M$ such that $\g(e_k,e_k)=1$ for all $k=2,\ldots,n$, and $\g(e_i,e_j)=0$ for all $i=0,1,\ldots,n$ and $j=2,\ldots,n$ with $i \neq j$. Then $e_0,\dots,e_n$ is a pseudo orthonormal basis on $T_q \M$. For each $k=0,\ldots,n$, let $E_k(s) \in T_{\gamma(s)}\M$ denote the parallel transport of $e_k$ along $\gamma$ to the point $\gamma(s)$. Observe that $E_0 = \dot \gamma$. Then $E_0(s),...,E_n(s)$ is a pseudo orthonormal basis on $T_{\gamma(s)}\M$. 

We now define the coordinate system $(z^0:=s,\ldots,z^n)$ through the map 
$$\mathcal F(s,z^1,...,z^n) = \exp_{\gamma(s)}(\sum_{k=1}^{n}z^{k} E_{k}(s)),
$$
where $\exp_{p}:T_p\M \to \M$ denotes the exponential map on $\M$ at a point $p$. Clearly
$$
\mathcal F(s,\underbrace{0,\ldots,0}_{n \hspace{1mm}\text{times}}) = \gamma(s),
\quad \forall s \in (a-\delta,b+\delta),
$$
is injective as $\gamma$ is not self-intersecting due to global hyperbolicity. Furthermore,
$$
\frac{\partial}{\partial z^k} \mathcal F(s,\underbrace{0,\ldots,0}_{n \hspace{1mm}\text{times}})= E_{k}(s),
\quad \forall k=0,\dots,n.
$$
The inverse function theorem applies, and we conclude that $\mathcal F$ is a smooth diffeomorphism in a neighborhood of $(a-\delta, b+\delta) \times \{0\}$. We define $\Phi =\mathcal F^{-1}$ and note that (i) and (ii) are satisfied. 

Since $E_0(s),...,E_n(s)$ is a pseudo orthonormal basis, also (\ref{g_on_gamma}) holds.
Let us now study the derivatives of $\g$ on $\gamma$.
Let $(s,a^1,\ldots a^n) \in \Phi(U)$
be fixed and consider the path 
$h(t)=\exp_{\gamma(s)}(t \sum_{i=1}^n a^{i}E_i(s))$.
As $h$ is a geodesic, it satisfies 
$$ \ddot{h}^{k} + \bar{\Gamma}^{k}_{\alpha \beta} \dot{h}^\alpha \dot{h}^\beta = 0,$$
where $\bar{\Gamma}^{k}_{\alpha \beta}$ are the Christoffel symbols of the second kind for $\g$.
In the Fermi coordinates $h^0 = s$ and $h^i = t a^i$ for $i=1,\dots,n$, and therefore $\ddot h^k = 0$ for all $k=0,\dots,n$.
By varying $(a^1, \dots, a^n)$, we see that 
$\bar{\Gamma}^{k}_{\alpha \beta} = 0$ for $k = 0,\dots,n$ and $\alpha,\beta = 1,\dots,n$.

As $E_\alpha$ is defined as a parallel transport,
there holds
$$
\nabla^{\g}_{\partial_0} \partial_j = \nabla^{\g}_{\dot{\gamma}(s)} E_{j}(s) = 0,
$$
and therefore, using the symmetry of the Levi-Civita connection,
$\bar{\Gamma}^{k}_{0 j} = \bar{\Gamma}^{k}_{j 0} = 0$
for $k,j = 0,\dots,n$.
Thus all the Christoffel symbols $\bar{\Gamma}^{k}_{i j}$, $i,j,k=0,\dots,n$, vanish on $\gamma$.
Hence, there holds on $\gamma$,
$$ 
\partial_{k} \g_{ij} = \langle \nabla^{\g}_{\partial_k} \partial_i, \partial_j \rangle_{\g} + \langle \partial_i, \nabla^{\g}_{\partial_k} \partial_j \rangle_{\g} = \Gamma_{k i j} + \Gamma_{i k j} = 0,
$$
where $\bar\Gamma_{\alpha i j} = \g_{\alpha \beta} \bar{\Gamma}^{\beta}_{i j}$ are the Christoffel symbols of the first kind.
\end{proof}

\subsection{WKB approximation}
\label{WKB}

We use the shorthand notation
$$
\mathcal P_{V} u = (\Box_{\bar{g}} + V)u.
$$
Analogously to the approximate geometric geometric optics solutions in Section \ref{GOsolutions},
we will construct approximate solutions to $\mathcal P_{V} u = 0$ which concentrate on a given null geodesic $\gamma : (a-\delta,b+\delta) \to \M$. 
We write $I = [a-\delta',b+\delta']$ with $\delta'>0$ as in Lemma~\ref{fermi}, and define the tubular set 
$$\mathcal V= \{x \in \M\,|\, s \in I,\ |z'|:=\sqrt{|z^1|^2+\ldots+|z^n|^2}<\delta'\}.$$
We consider the WKB ansatz
$$u_\tau(s,z') = e^{i\tau \phi(s,z')} a_\tau(s,z'),$$
in the Fermi coordinates $z=(s,z')$ near $\gamma$.
The complex valued phase $\phi \in C^{\infty}(\mathcal V)$ and amplitude $a_\tau \in C^{\infty}_c(\mathcal V)$ will be constructed below. 

We have
\bel{conj}
\mathcal P_V (e^{i\tau \phi} a_\tau) = e^{i\tau \phi} \left (\tau^2 (\mathcal{H}\phi) a_\tau - i \tau \mathcal{T}a_\tau + \mathcal P_V a_\tau\right).
\ee
where the operators $\mathcal{H},\mathcal{T}: C^{\infty}(\M) \to C^{\infty}(\M)$ are defined through
\bel{eikonal-transport}
\mathcal{H}\phi:=\langle d\phi, d\phi \rangle_{\bar{g}}, \quad \mathcal{T}a:= 2 \langle d\phi,da\rangle_{\g}- (\Box_{\g} \phi)a.
\ee
We make the following ansatz for $\phi$ and $a$ respectively:
\bel{phase-amplitude}
\begin{aligned}
 \phi = \sum_{j=0}^{N} \phi_j(s,z')& \quad \text{and} \quad a_\tau(s,z')= \chi(\frac{|z'|}{\delta'}) \sum_{k=0}^{N} \tau^{-k}v_k(s,z'),\\
&v_k(s,z')=\sum_{j=0}^{N} v_{k,j}(s,z'),
\end{aligned}
\ee
where for each $j,k=0,\ldots,N$, $\phi_j$ and $v_{k,j}$ are complex valued homogeneous polynomials of degree $j$ with respect to the variables $z^{i}$ with $i=1,...,n$, and $\chi(t)$ is a non-negative smooth function of compact support such that $\chi(t)=1$ for $|t| \leq \frac{1}{4}$ and $\chi=0$ for $|t|\geq \frac{1}{2}$. 

The equation $\mathcal{H}\phi = 0$ is often called the eikonal equation, and we require that it is satisfied in the following sense on $\gamma$,
\bel{eikonal}
\frac{\partial^{\alpha}}{\partial {z}^{\alpha}} (\mathcal{H}\phi)(s,0,\ldots,0) = 0, \quad \forall s \in I, 
\ee
for all multi-indices $\alpha = \{0,1,\dots\}^{1+n}$ with $|\alpha| \le N$. Here $I$ is the interval in Lemma \ref{fermi}.
We also require that the following transport type equations are satisfied on $\gamma$ by the leading $v_0$ and subsequent $v_{k}$, $k=1,\dots,N$, amplitudes,
    \begin{align}
\label{transport}
\frac{\partial^{\alpha}}{\partial {z}^{\alpha}} (\mathcal{T}v_{0})(s,0,\ldots,0) &= 0, \quad \forall s \in I, 
\\\label{transport1}
\frac{\partial^{\alpha}}{\partial {z}^{\alpha}} (-i\mathcal{T}v_{k}+\mathcal P_V v_{k-1})(s,0,\ldots,0) &= 0, \quad \forall s \in I, 
    \end{align}
for all $\alpha = \{0,1,\dots\}^{1+n}$ with $|\alpha| \le N$.
With these notations, we will define:
\begin{definition}
\label{gaussbeamdef}
An approximate Gaussian beam of order $N$ along $\gamma$ is a function $u_\tau=e^{i\tau \phi}a_\tau$ with $\phi,a_\tau$ defined as in \eqref{phase-amplitude}, such that the following properties hold:
\begin{itemize}
\item[(i)] {Equations \eqref{eikonal}--\eqref{transport1} hold.}
\item[(ii)]{$\Im(\phi)|_{\gamma}=0$, that is, the imaginary part of $\phi$ vanishes on $\gamma$.}
\item[(iii)] $\Im(\phi)(z) \geq C |z'|^2$ for all points $z \in \mathcal V$. 
\end{itemize}
\end{definition}

It follows from (i) and (iii) that $u_\tau$ is an approximate solution to equation $\mathcal P_V u = 0$ in the sense of the following lemma. 
\begin{lemma}
\label{lemerror}
Let $u_\tau$ be an approximate Gaussian beam of order $N$ along $\gamma$ in the sense of Definition~\ref{gaussbeamdef}. 
Suppose that the end points of $\gamma$ are outside $[0,T] \times M$ in the sense that $\gamma(a), \gamma(b) \notin [0,T] \times M$.
Then for all $\tau > 0$
$$\| \mathcal P_V u_\tau \|_{H^k((0,T)\times M)} \lesssim \tau^{-K},\quad \| u_\tau\|_{\mathcal C ((0,T)\times M)}\lesssim 1.$$
where $K=\frac{N+1-k}{2}-1$.
\end{lemma}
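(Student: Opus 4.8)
The plan is to substitute the WKB ansatz into the conjugation identity \eqref{conj} and to bound the resulting remainder by combining the transverse vanishing forced by the eikonal and transport equations with the Gaussian localization of $e^{i\tau\phi}$ around $\gamma$.

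First I would insert the expansions \eqref{phase-amplitude} into the bracket $\tau^2(\mathcal H\phi)a_\tau - i\tau\mathcal T a_\tau + \mathcal P_V a_\tau$ of \eqref{conj} and reorganize it as a polynomial in $\tau$. Collecting the eikonal contribution together with the terms $-i\mathcal T v_k$ and $\mathcal P_V v_{k-1}$, the coefficient of every power $\tau^{l}$ with $l\geq 1-N$ is, up to bounded amplitude factors, one of the quantities $\mathcal H\phi$, $\mathcal T v_0$ or $-i\mathcal T v_k+\mathcal P_V v_{k-1}$ appearing on the left of \eqref{eikonal}, \eqref{transport} and \eqref{transport1}; the only genuinely unbalanced term is the bottom one, $\tau^{-N}\mathcal P_V v_N$. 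By (i) of Definition~\ref{gaussbeamdef} all $z$-derivatives of these coefficients up to order $N$ vanish on $\gamma$, so by Taylor's theorem in the transverse variables (the coefficients being smooth on the compact tube $\mathcal V$) each is bounded by $C|z'|^{N+1}$ uniformly in $s\in I$, while $\mathcal P_V v_N=O(1)$.

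Next I would use property (iii), which yields the pointwise weight $|e^{i\tau\phi}(z)|=e^{-\tau\Im\phi(z)}\leq e^{-C\tau|z'|^{2}}$. The elementary estimates $\sup_{z'}|z'|^{m}e^{-C\tau|z'|^{2}}\lesssim\tau^{-m/2}$, together with their weighted $L^{2}$-analogues obtained by the rescaling $z'=\tau^{-1/2}w$, convert the transverse vanishing of order $N+1$ into negative powers of $\tau$. Feeding in these bounds term by term, weighted by the corresponding power of $\tau$ and noting that $\tau^{-N}\mathcal P_V v_N$ is already $O(\tau^{-N})$, produces the $H^{0}$ estimate. To reach $H^{k}$ I would differentiate $\mathcal P_V u_\tau=e^{i\tau\phi}\cdot(\text{bracket})$ up to order $k$: a derivative hitting the bracket merely lowers the transverse order, whereas a derivative hitting $e^{i\tau\phi}$ produces a factor $i\tau\,d\phi$, whose tangential component is $O(1)$ and whose transverse component vanishes on $\gamma$; tracking these factors against the localization and summing over all multi-indices of length at most $k$ is what yields the exponent $K=\tfrac{N+1-k}{2}-1$.

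It remains to dispose of two boundary effects and the $\mathcal C$-bound. The transverse cut-off $\chi(|z'|/\delta')$ in \eqref{phase-amplitude} equals $1$ near $\gamma$, and each of its derivatives is supported in $\{|z'|\geq\delta'/4\}$, where $e^{-C\tau|z'|^{2}}\leq e^{-c\tau}$; hence the terms it generates are exponentially small and negligible. The hypothesis $\gamma(a),\gamma(b)\notin[0,T]\times M$ ensures that $\supp(u_\tau)\cap([0,T]\times M)$ stays away from the ends $s=a-\delta'$ and $s=b+\delta'$ of $I$, so the integrals over the slab $(0,T)\times M$ never meet the $s$-edges of the tube and no contribution is picked up there. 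Finally, since $\Im\phi\geq 0$ by (iii), one has $|u_\tau|=e^{-\tau\Im\phi}|a_\tau|\leq|a_\tau|$, and $a_\tau=\chi\sum_{k}\tau^{-k}v_k$ is bounded for $\tau\geq1$ by the supremum of the polynomials $v_k$ on the compact tube, giving $\|u_\tau\|_{\mathcal C((0,T)\times M)}\lesssim1$. The step I expect to be most delicate is the $H^{k}$ power-counting: one has to weigh the factor of $\tau$ lost whenever a derivative falls on the oscillation against the $\tau^{-1/2}$ gained for each order of transverse vanishing or transverse Gaussian decay, and verify that the worst multi-index of length $\leq k$ reproduces exactly $\tau^{-K}$ and not a larger power.
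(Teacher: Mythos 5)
Your proposal is correct and takes essentially the same route as the paper's own proof: the paper likewise converts the order-$N$ vanishing on $\gamma$ forced by \eqref{eikonal}--\eqref{transport1} into the pointwise bound $|\p_z^\alpha \mathcal P_V u_\tau| \lesssim \tau^{|\alpha|}|e^{i\tau\phi}|\bigl(C_0\tau^2|z'|^{N+1}+C_1\tau|z'|^{N+1}+C_2\tau^{-N}\bigr)$ via Taylor's theorem, then uses $|e^{i\tau\phi}|\le e^{-C\tau|z'|^2}$ from property (iii) and the rescaling $\rho=\sqrt{\tau}\,|z'|$ to integrate, exactly as you describe. The differences are only presentational: you additionally spell out the cut-off and endpoint issues and flag the final $H^k$ power count, all of which the paper compresses into the single displayed estimate and change-of-variables identity.
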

\begin{proof}
The second estimate follows trivially from \eqref{phase-amplitude} and (iii). 
Note that equations \eqref{eikonal}, \eqref{transport} and \eqref{transport1} imply that
\[ |\p_z^\alpha \mathcal P_V u_\tau | \lesssim \tau^{|\alpha|}|e^{i\tau\phi}|(C_0\tau^2 |z'|^{N+1}+C_1\tau|z'|^{N+1}+C_2 \tau^{-N}).\]
Moreover, $|e^{i\tau\phi}| \le e^{-C\tau|z'|^2}$ by (iii).
Writing $r = |z'|$, we obtain the first estimate by using the change of variables $\rho^2 = \tau r^2$,
$$
\int_\R e^{-C \tau r^2} r^{2k} dr = C_k \tau^{-k/2-1}, 
$$
where $C_k = \int_\R e^{-C \rho^2} \rho^{2k} d\rho$ and $k>0$.
\end{proof}

Observe also that if $u_\tau= e^{i\tau {\phi}} {a}_\tau$ is an approximate Gaussian beam, then also
\begin{align}\label{gb_tilde}
\tilde u_\tau= e^{-i\tau \bar{\phi}} \bar{a}_\tau,
\end{align}
satisfies the estimates in Lemma \ref{lemerror}.

\subsubsection{The phase function}

Let us now construct $\phi$ and $a_\tau$ satisfying \eqref{eikonal}--\eqref{transport1}.
We begin by constructing the expansion of the phase function $\phi$ in such a way that equation \eqref{eikonal} holds. For $|\alpha|=0$, we obtain the equation on $\gamma$
$$
\sum_{k,l=0}^n\bar{g}^{kl} \frac{\partial \phi}{\partial z^k} \frac{\partial \phi}{\partial z^l} = 0.$$
Using (\ref{g_on_gamma}), this reduces to
\bel{m=0}
2 \partial_0 \phi \, \partial_1 \phi + \sum_{k=2}^n (\partial_k \phi)^2 =0.
\ee
Recalling that for all $i,j,k=0,\dots,n$ we have $\pd_i \g^{jk}=0$ on $\gamma$, we obtain 
similarly for $|\alpha|=1$, 
\bel{m=1}
\sum_{k,l=0}^n \bar{g}^{kl} \partial^2_{i k} \phi \, \partial_{l} \phi =0,
\ee
for all $i=1,\ldots,n$. 
Equations \eqref{m=0} and \eqref{m=1} are satisfied setting
\bel{phi01}
\phi_0=0\quad\text{and}\quad \phi_1=z^1.
\ee
Indeed, \eqref{m=0} holds since $\p_0 \phi = \p_k \phi = 0$ for $k=2,\dots,n$, and \eqref{m=1}
holds since $\g^{kl} \p_l \phi \ne 0$ on $\gamma$ only if $k=0$ and $l=1$, and since $\p_{i0}^2 \phi = 0$ on $\gamma$ for all $i=1,\ldots,n$.

Next, we write 
$$\phi_2(s,z') := \sum_{1 \leq i,j \leq n} H_{ij}(s) z^{i}z^{j},$$ 
where $H_{ij}=H_{ji}$ is a complex-valued matrix. By Definition~\ref{gaussbeamdef}, we require that 
the imaginary part of $H$ is positive definite, that is,
\bel{positivity}
\Im H(s) >0, \quad \forall s\in I.
\ee
The equation (\ref{eikonal}) with $|\alpha|=2$ is equivalent with
    \begin{align*}
\sum_{k,l=0}^n(
2 \bar{g}^{kl} \partial^3_{kij} \phi \, \partial_l \phi 
+
2\bar{g}^{kl} \partial^2_{ki} \phi \, \partial^2_{lj}\phi 
+
\partial^2_{ij}\bar{g}^{kl} \partial_k \phi \, \partial_l \phi
+ 
4 \p_i \g^{kl}\p_{jk}^2\phi\,\p_l \phi)=0,
    \end{align*}
for all $i,j=1,\dots,n$. Using (\ref{phi01}), (\ref{g_on_gamma}), $\p_{i0}^2 \phi = 0$, $i=1,\dots, n$, and $\pd_i \g^{kl}=0$, this reduces to 
$$2\bar{g}^{10} \partial^3_{0ij}\phi +2\sum_{k=2}^{n} \partial^2_{ki} \phi \, \partial^2_{kj}\phi + \partial^2_{ij}\bar{g}^{11}=0.$$
Noting that $\partial^2_{ij} \phi = 2 H_{ij}$,
we obtain the following Riccati equation for $H(s)$:  
\bel{riccati}
\frac{d}{ds} H + HCH + D=0, \quad \forall s \in I,
\ee
where $C$ and $D$ are the matrices defined through
\bel{Cmatrix}
\begin{cases}
C_{11}= 0&\\
C_{ii}=2& \quad i=2,\ldots,n, \\
 C_{ij}=0& \quad \text{otherwise,}
\end{cases}
\qquad D_{ij}= \frac{1}{4} \p^2_{ij} \g^{11}.
\ee 

We recall the following result from \cite[Section 8]{KKL} regarding solvability of the Riccati equation:
\begin{lemma}
\label{ricA}
Let $s_0 \in I$ and let $H_0$ be a symmetric matrix with $\Im H_0 > 0$.
The Riccati equation (\ref{riccati}), together with the initial condition $H(s_0) = H_0$, has a unique solution $H(s)$ for all $s \in I$. We have $\Im H>0$ and $H(s)=Z(s)Y^{-1}(s)$, where the matrix valued functions $Z(s),Y(s)$ solve the first order linear system
$$ \frac{d}{ds} Y = CZ\quad \text{and}\quad  \frac{d}{ds} Z = -DY, \quad \text{subject to} \quad Y(s_0)=I,\quad Z(s_0)=H_0.$$ 
Moreover, the matrix $Y(s)$ is non-degenerate on $I$, and there holds
$$
\det(\Im H(s)) \cdot |\det(Y(s))|^2=\det(\Im(H_0)).
$$
\end{lemma}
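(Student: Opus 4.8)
The plan is to linearize the matrix Riccati equation \eqref{riccati} via the classical substitution $H = ZY^{-1}$, reducing its solvability to that of the linear system for $(Y,Z)$, and then to extract the non-degeneracy of $Y$, the positivity of $\Im H$, and the determinant identity from two conserved bilinear quantities. First I would invoke standard linear ODE theory: since $C$ is constant and $D$ depends smoothly on $s$ through \eqref{Cmatrix}, the linear system $\frac{d}{ds}Y = CZ$, $\frac{d}{ds}Z = -DY$ with $Y(s_0)=I$, $Z(s_0)=H_0$ has a unique solution $(Y(s),Z(s))$ on all of $I$, with no possibility of finite-time blow-up. A direct differentiation then shows that $H := ZY^{-1}$ solves \eqref{riccati} at every $s$ where $Y$ is invertible: using $\frac{d}{ds}Y^{-1} = -Y^{-1}(CZ)Y^{-1}$ one computes $\frac{d}{ds}H = -D - HCH$.

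The heart of the argument is two conserved quantities, both resting on the symmetry of $C$ and $D$ (the matrix $C$ is diagonal, and $D_{ij} = \frac{1}{4}\partial^2_{ij}\g^{11}$ is symmetric since mixed partials commute). Differentiating $A := Y^{T}Z - Z^{T}Y$ and substituting the equations, the terms cancel in pairs so that $\frac{d}{ds}A = 0$; since $A(s_0) = H_0 - H_0^{T} = 0$ by symmetry of $H_0$, we obtain $Y^{T}Z = Z^{T}Y$ for all $s$, which is exactly the assertion that $H = ZY^{-1}$ is symmetric. Similarly, writing $*$ for the conjugate transpose, the Hermitian quantity $B := \frac{1}{2i}(Y^{*}Z - Z^{*}Y)$ satisfies $\frac{d}{ds}B = 0$, and $B(s_0) = \frac{1}{2i}(H_0 - \overline{H}_0) = \Im H_0$, so $B \equiv \Im H_0$ throughout $I$. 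Conjugating by $Y$ and using that $H$ is symmetric (so that $H^{*} = \overline{H}$ and $\Im H = \frac{1}{2i}(H - H^{*})$) gives the congruence $Y^{*}(\Im H)Y = B = \Im H_0$ wherever $Y$ is invertible.

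The main obstacle, namely ensuring that $Y(s)$ never degenerates on $I$, is then dispatched using $B$ alone, which is defined from the linear system and needs no inversion. Suppose $Y(s_1)w = 0$ for some $s_1 \in I$ and some $w \neq 0$. Evaluating the identity $\frac{1}{2i}(Y^{*}Z - Z^{*}Y) = \Im H_0$ on $w$ yields $w^{*}(\Im H_0)w = \frac{1}{2i}\big((Yw)^{*}(Zw) - (Zw)^{*}(Yw)\big) = 0$, contradicting $\Im H_0 > 0$. Hence $Y(s)$ is invertible for every $s \in I$, so $H = ZY^{-1}$ is globally defined, symmetric, and solves \eqref{riccati}; the congruence $Y^{*}(\Im H)Y = \Im H_0 > 0$ forces $\Im H > 0$ throughout, and taking determinants gives $|\det Y|^2 \det(\Im H) = \det(\Im H_0)$.

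Finally, uniqueness for the Riccati problem follows because the right-hand side $-HCH - D$ of \eqref{riccati} is smooth in $(s,H)$, hence locally Lipschitz in $H$; thus the constructed solution, which exists on all of $I$, is the only solution there. The only point requiring care beyond bookkeeping is the sign and symmetry conventions in the two conservation computations, but once $C$ and $D$ are recorded as symmetric the cancellations are immediate, so I expect the non-degeneracy step via $B$ to be the sole conceptual hurdle.
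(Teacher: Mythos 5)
Your proof is correct and complete: the linearization $H=ZY^{-1}$, the two conserved pairings $Y^{T}Z-Z^{T}Y$ and $\frac{1}{2i}(Y^{*}Z-Z^{*}Y)$, the non-degeneracy of $Y$ via the Hermitian invariant, and the congruence $Y^{*}(\Im H)Y=\Im H_0$ yielding both positivity and the determinant identity are all carried out accurately. The paper itself gives no proof of Lemma~\ref{ricA} --- it is quoted from \cite[Section 8]{KKL} --- and your argument is exactly the standard one used there, so your write-up is essentially the same approach, with the added benefit of being self-contained.
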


We refer the reader to \cite[Section 3.5]{FIKO} for a geometrically invariant interpretation of the function $Y(s)$ above. 
With the help Lemma \ref{ricA}, we have so far succeeded in determining the coefficients of $\phi$ up to the third term in  \eqref{phase-amplitude}. 
The remaining terms can be solved through linear first order ODEs. 

We will describe only the case $j=3$ in detail, the cases $j > 3$ being analogous. 
We see that equation (\ref{eikonal}) with $\p_z^\alpha = \p_p\p_q\p_r$ is equivalent to 
$$
2 \sum_{k,l=0}^n(
\bar{g}^{kl} \partial_{k} \p_z^\alpha \phi \, \partial_l \phi 
+
\bar{g}^{kl} \partial^3_{kpq} \phi \, \partial^2_{lr}\phi
+
\bar{g}^{kl} \partial^3_{kpr} \phi \, \partial^2_{lq}\phi
+
\bar{g}^{kl} \partial^3_{kqr} \phi \, \partial^2_{lp}\phi)
+ 
\mathcal F_\alpha = 0,
$$
where $\mathcal F_\alpha$ depends only on $\phi_j$ with $j \leq 2$. It holds on $\gamma$ that 
$$
\sum_{k,l=0}^n 
\bar{g}^{kl} \partial_{k} \p_z^\alpha \phi \, \partial_l \phi
= \p_s \p_z^\alpha \phi,
$$
and we see that the coefficients $\p_z^\alpha \phi$ with $|\alpha|=3$ satisfy a system of linear ODEs with the right-hand side depending on $\phi_j$ and $\p_s \phi_j$ with $j \leq 2$.
Solving this system with any fixed initial condition gives $\p_z^\alpha \phi$ with $|\alpha|=3$, and the polynomials $\phi_j$ of higher degree are constructed analogously.


\subsubsection{The amplitude function}
We study next the leading amplitude function $v_0$ by determining the terms $\{v_{0,k}\}_{k\geq0}$ in such a way that equation \eqref{transport} holds for all $m=0,\ldots,N$. For $|\alpha|=0$, using the definition of $\mathcal{T}$, we obtain on $\gamma$
$$2 \sum_{k,l=0}^n \g^{kl} \frac{\p \phi}{\p z^k}  \, \frac{\p v_0}{\p z^l}  - (\Box_{\g} \phi) v_0=0, \quad \forall s \in I.$$
Recalling Lemma ~\ref{fermi}, we have on $\gamma$
$$-\Box_{\g} \phi= \sum_{i,j=0}^n \g^{ij}\pd^2_{ij}\phi=\sum_{i=2}^n \pd^2_{ii} \phi=\Tr(CH),$$
and therefore
$$ 2 \frac{d}{ds} v_{0,0} + \Tr(CH) v_{0,0}=0, \quad \forall s\in I.$$
Lemma ~\ref{ricA} yields
$$ \Tr(CH)=\Tr(\dot{Y}Y^{-1})=\Tr \frac{d}{ds} \log Y =\frac{d}{ds}\log \det Y,$$
which implies that we can set
\bel{v_0}
v_{0,0}(s)= \det(Y(s))^{-\frac{1}{2}}, \quad \forall s \in I.
\ee

The subsequent terms $v_{0,k}$ with $k=1,\ldots,N$ can be constructed by solving linear first order ODEs. Indeed, taking $m=k$ in equation \eqref{transport} and recalling the definition of $\mathcal{T}$ we obtain the following equation for the homogeneous polynomial $v_{0,k}(s,z')$:
\bel{vk}
2 \frac{\p}{\p s} v_{0,k} + \Tr(CH) v_{0,k} + \mathcal{E}_k=0 \quad \forall k \geq 1,\ s \in I.
\ee
where $\mathcal{E}_k$ is a homogeneous polynomial of degree $k$ in the $z'$ coordinates with the coefficients only depending on $\{v_{0,l}\}_{l=0}^{k-1}$ and $\{\phi_l\}_{l=0}^{k+2}$.

To determine the subsequent terms $v_{j}$ we need to solve equation \eqref{transport1}, but this can be accomplished analogously to the above argument and is therefore omitted for the sake of brevity.
However, let us establish in detail the analogue of (\ref{def_b1c1}) that will be needed later.
Equation (\ref{transport1}) with $k=1$ and $|\alpha| = 0$ reads on $\gamma$,
$$
2 \frac{d}{ds} v_{1,0} + \Tr(CH) v_{1,0}=  \Box_{\g} v_0 + V v_{0,0},
$$
and therefore we may take
\bel{v_1}
\begin{aligned}
v_{1,0}(s)&=b_{1,0}(s)+c_{1,0}(s),\\
b_{1,0}(s)&=-\frac{i}{2} \det Y(s)^{-\frac{1}{2}}\int_{s_0}^s (\Box_{\g}v_0)(\tilde{s},0) \det Y(\tilde{s})^{\frac{1}{2}}\,d\tilde{s}.\\
c_{1,0}(s)&=-\frac{i}{2}\det Y(s)^{-\frac{1}{2}}\int_{s_0}^s V(\tilde{s},0)\,d\tilde{s}.
\end{aligned}
\ee

This completes the construction of solutions $\phi$ and $a_\tau$ to equations \eqref{eikonal}--\eqref{transport1}. 
The function $u_\tau=e^{i\tau \phi}a_\tau$ is then a formal Gaussian pf order $N$ when $\delta'>0$ in (\ref{phase-amplitude}) is sufficiently small. Indeed,
conditions (i) and (ii) in Defition \ref{gaussbeamdef} follow from  \eqref{eikonal}--\eqref{transport1} and (\ref{phi01}), repectively, and (iii) follows from (\ref{positivity}) for small $\delta'>0$.

\section{Proof of Theorem~\ref{t1}}
\label{mainproof}

\subsection{Source terms}
Let $\gamma$ be a null geodesic and suppose that the end points of $\gamma$ are outside $[0,T] \times M$ in the sense of Lemma \ref{lemerror}. Suppose also that $\gamma$ intersects $\mho$ 
and write $\gamma(s_0) = q \in \mho$, $\xi^\sharp = \dot \gamma(s_0)$.
Let $u_\tau$ to be a formal Gaussian beam along $\gamma$.
Analogously to (\ref{def_f_qxi}), we can again choose cut off functions $\zeta_+$ and $\zeta_-$ such that
the solution $\mathcal U_\tau$ of 
\bel{eq6}
\begin{aligned}
\begin{cases}
\mathcal P_V \mathcal U_\tau =f_{\tau,q,\xi}, 
&\forall (t,x') \in (0,T)\times M,
\\
\mathcal U_\tau(0,x')= 0,\, \p_t \mathcal U_\tau(0,x')=0,
&\forall x' \in M.
\end{cases}
    \end{aligned}
\ee 
with the source 
    \begin{align*}
f_{\tau,q,\xi} = \zeta_+ (\Box_{\g} + V) (\zeta_- u_\tau) \in \CI^{\infty}_c(\rotom)
    \end{align*}
satisfies the estimate
\bel{sourceest1}
\|\mathcal U_\tau - \zeta_- u_\tau\|_{H^{k+1}((0,T)\times M)} \lesssim \tau^{-K}.
\ee
Here $K$ is as in Lemma \ref{lemerror} and $\zeta_-=1$ in $\mathscr J_{+}(q)$. 
It is also important that $s_0$ in (\ref{v_1}) is chosen so that 
\begin{align}\label{init_conds_q}
q=\gamma(s_0) = (s_0, 0, \dots, 0)
\end{align}
in the associated Fermi coordinates. Likewise, initial conditions for the other higher order amplitudes $v_{k,j}$, $k \geq 1$, $j \geq 0$,
need to be set at this point. 
As before the map $L_V$ determines $V|_{\rotom}$ uniquely, and $f_{\tau,q,\xi}$ can be then constructed given $L_V$. 
Moreover, we can also construct a test function $f_{\tau,q,\xi}^+$ analogously to (\ref{def_f_qxi_p}).


\subsection{Three-parameter family of sources}
\label{waveinterac2}

We start by recalling a key lemma, see \cite[Lemma 2.3]{KLUI}. We will re-produce the proof for completeness, and toward that end we recall some concepts from Lorentzian geometry, namely, the notions of lengths of causal curves $\alpha:I\to \R$ and time-separation between points $p,q \in \M$. We can define the length $L$ of a causal curve $\alpha:I\to \R$ as follows
$$ L(\alpha)=\int_I \sqrt{-\g(\dot{\alpha}(s),\dot{\alpha})(s)}\,ds.$$
We also define the time-separation function $\tau(p,q) \in [0,\infty)$ for $p\preccurlyeq q$ through
\bel{timeseparation}
\tau(p,q)=\sup \{L(\alpha)\,|\, \alpha \,\text{is a future pointing causal curve from $p$ to $q$}\}.
\ee
We set $\tau$ to be zero if $p \preccurlyeq q$ does not hold. 
Under the global hyperbolicity assumption,
$\tau:\M\times\M\to \R$ is continuous.
Heuristically, time separation in globally hyperbolic Lorentzian geometries plays the role of Riemannian distance in Riemannian geometries. Indeed, we have the well-known proposition that given $p \preccurlyeq q$, there exists a causal geodesic from $p$ to $q$ of length $\tau(p,q)$. 
Furthermore given a curve $\alpha$, we say that a path $\alpha$ on $\mathcal M$ is a pre-geodesic, if it is $\CI^1$ smooth and admits a parametrization $\alpha : I \to \mathcal M$ such that $\dot{\alpha}(t)\neq 0$ for all $t \in I$ and $t \mapsto \alpha(t)$ is a geodesic.

\begin{lemma}
\label{generic}
For any $p \in \mathbb D \setminus \mho$ there exist null geodesics $\gamma^{+}$ and $\gamma^{-}$ and points $q^{\pm}=(t^{\pm},x'^{\pm}) \in \rotom$, with $t^- < t^+$, such that $\gamma^{\pm}$ goes through $p$ and $q^\pm$ and that the null geodesics $\gamma^+$ and $\gamma^-$ intersect on $[t^{-},t^{+}] \times M$ only at $p$.  
\end{lemma}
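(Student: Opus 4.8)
The plan is to use the causal diamond structure to produce two distinct null geodesics through $p$ that exit into $\mho$ both to the past and to the future, and to arrange that they meet only at $p$ by a genericity/dimension-counting argument. Since $p \in \mathbb D = \mathscr J_+(\rotom) \cap \mathscr J_-(\rotom)$, I would first observe that $p \in \mathscr J_-(\rotom)$ means there is a future-pointing causal curve from $p$ reaching some point of $\mho$, and $p \in \mathscr J_+(\rotom)$ means there is a past-pointing causal curve from $p$ reaching $\mho$. The key is to upgrade these causal curves to \emph{null geodesics}. For this I would invoke the proposition recalled just before the statement: for $p \preccurlyeq q$ there is a causal geodesic of length $\tau(p,q)$. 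Because $p \notin \mho$ and $\mho$ is open, I expect to be able to choose the target points $q^\pm \in \mho$ on the boundary of reachability, forcing the connecting geodesic to be null rather than timelike (a timelike geodesic could be perturbed to reach a point strictly inside, contradicting that $q^\pm$ lies where $\tau = 0$ on the boundary). This yields two null geodesics $\gamma^-$ (future-pointing from $p$ to $q^+$) and $\gamma^+$ (past-pointing from $p$ to $q^-$), with $t^- < t_1 < t^+$ where $p = (t_1, x_1')$; I would reparametrize so each passes through both $p$ and its $q^{\pm}$.

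Next I would address distinctness and the single-intersection requirement. The two geodesics are automatically distinct since one is future-pointing and the other past-pointing from $p$, so their tangent directions at $p$ differ. The real content is that $\gamma^+$ and $\gamma^-$ meet on $[t^-,t^+] \times M$ \emph{only} at $p$. Here I would exploit the freedom in choosing the null directions at $p$: the set of null directions in $T_p\M$ is an $(n-1)$-dimensional cone (parametrized, after identifying with the splitting, by directions on $S^{n-1}$), so there is an $(n-1)$-parameter family of candidate null geodesics emanating from $p$ into the past and into the future. Given one such pair, a second intersection point $p' \neq p$ on $[t^-, t^+] \times M$ would force both geodesics through $p'$, which is a codimension condition; I would argue that for a generic choice of the null direction (using $n \geq 2$, so the null cone genuinely has positive-dimensional freedom), the past and future null geodesics share no second point. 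Concretely, I would fix $\gamma^-$ and vary $\gamma^+$ over the null directions at $p$, noting that the locus of directions whose geodesic re-meets $\gamma^-$ is nowhere dense, and select a direction avoiding it while keeping $q^\pm \in \mho$.

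The main obstacle I anticipate is precisely this last genericity step: ensuring a \emph{second} intersection can always be avoided while \emph{simultaneously} keeping both endpoints $q^\pm$ inside the open set $\mho$ and preserving the null (rather than timelike) character of the geodesics. Two geodesics through a common point $p$ can in principle refocus at a conjugate-type point, so I would need the dimension count $n \geq 2$ to guarantee enough transversal directions, and I would likely phrase the argument via Sard-type or Baire-category reasoning on the exponential map at $p$, perhaps shrinking the relevant segment by choosing $t^{\pm}$ close enough to $t_1$ that no refocusing occurs on $[t^-,t^+]$. Global hyperbolicity is essential throughout, both to guarantee the length-maximizing causal geodesics exist and to ensure each geodesic meets the Cauchy slices $\{t\}\times M$ at most once, which controls the behavior of the geodesics on the compact time-slab $[t^-,t^+]\times M$.
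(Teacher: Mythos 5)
Your first step (producing the null geodesics by pushing the target points to the ``boundary of reachability'') is essentially the paper's construction: the paper defines the earliest/latest observation times $t^{+}=\inf\{t:\tau(p,(t,x'^{+}))>0\}$ and $t^{-}=\sup\{t:\tau((t,x'^{-}),p)>0\}$, uses continuity of the time-separation function $\tau$ (global hyperbolicity) to get $\tau(p,q^{+})=\tau(q^{-},p)=0$, and then invokes O'Neill, Proposition 10.46, to conclude the connecting causal curves are null pregeodesics. However, your second step contains a genuine gap, and a secondary error. The secondary error first: the two geodesics are \emph{not} ``automatically distinct since one is future-pointing and the other past-pointing.'' The past-directed and future-directed tangents at $p$ may be proportional, in which case $\gamma^{+}$ and $\gamma^{-}$ are segments of one and the same unparametrized null geodesic and intersect along a whole segment. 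The paper has to work to exclude this, by replacing $\hat q^{-}$ with a slightly earlier point $\hat q_{\epsilon}^{-}=(t^{-}-\epsilon,x'^{-})\in\rotom$ and rerunning the construction, using the shortcut principle to show the perturbed point still has $\tau(\hat q_\epsilon^-,p)>0$.

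The main gap is the genericity/dimension-counting argument for the single-intersection property: it is false in general that, for fixed $\gamma^{-}$, the set of null directions at $p$ whose geodesics re-meet $\gamma^{-}$ in the slab is nowhere dense. In a globally hyperbolic spacetime with refocusing, e.g.\ the Einstein cylinder $\R\times S^{n}$, \emph{every} null geodesic issued from $p$ passes through the antipodal conjugate point; if that point lies in $[t^{-},t^{+}]\times M$, every admissible choice of direction produces a second intersection, so no Baire/Sard argument can save you. Your proposed fallback --- shrinking $[t^{-},t^{+}]$ so that no refocusing occurs --- is not available: $t^{\pm}$ are pinned down by the requirement $q^{\pm}\in\rotom$, and since $p\notin\rotom$ these times are bounded away from $t_{1}$ by how long the geodesics take to reach $\rotom$. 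The paper's resolution is that the single-intersection property is \emph{not} generic but a consequence of the extremal choice of $q^{\pm}$: if $\gamma^{+}$ and $\gamma^{-}$ met at some $\tilde p\neq p$ in $(t^{-},t^{+})\times M$, then concatenating the two (distinct) geodesics through $\tilde p$ gives a broken causal path from $p$ to $q^{+}$ (or from $q^{-}$ to $p$), which by O'Neill, Proposition 10.46, can be shortcut to yield $\tau(p,q^{+})>0$ (resp.\ $\tau(q^{-},p)>0$), contradicting the vanishing of $\tau$ built into the definition of $q^{\pm}$; intersections exactly at $q^{\pm}$ are handled by sliding $q^{\pm}$ along the geodesic toward $p$ within $\rotom$. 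This shortcut argument is the missing idea in your proposal, and it is what makes the lemma hold on arbitrary globally hyperbolic manifolds, conjugate points and all.
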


\begin{proof}
By the definition of $\mathbb D$, there are points
$\hat{q}^{-}=(\hat{t}^{-},x'^{-})$ and $\hat{q}^{+}=(\hat{t}^{+},x'^{+})$ in $\mho$
such that $\hat{q}^{-} \preccurlyeq  p \preccurlyeq  \hat{q}^{+}$.
Analogous to \cite{KLUI}, we define the earliest observation times,
\[
\begin{aligned}
t^{+}=\inf\{t\in[0,T]\,&|\,\tau(p,(t,x'^+))>0\},\\
t^{-}=\sup\{t \in[0,T]\,&|\,\tau((t,x'^-),p)>0\},
\end{aligned}
\]
and set $q^{\pm}=(t^{\pm},x'^{\pm})$.
As $p \notin \mho$, all the three points ${q}^{-} \preccurlyeq  p \preccurlyeq {q}^{+}$ are distinct. 
Writing $p= (t_p, x_p')$, this implies that $t^- < t_p < t^+$.
Since $\M$ is globally hyperbolic, $\tau$ is continuous and therefore 
$$\tau(p,q^+)=\tau(q^-,p)=0.$$ 
But then \cite[Proposition 10.46]{O'neil} implies that the causal curves from $p$ to $q^+$ and from $q^-$ to $p$ are null pregeodesics. 
In particular, there are null geodesics $\gamma^{\pm}$ going through $p$ and $q^\pm$.

 Observe also that $0 < \hat{t}^{-} \leq t^-$. 
Thus $\hat q_\epsilon^- = (t^- - \epsilon, x'^-) \in \mho$ for small $\epsilon > 0$.
The path from $\hat q_\epsilon^-$ to $p$, consisting of a timelike path from $\hat{q}_\epsilon^-$ to $q^-$ together with $\gamma^-$, 
is not a null pregeodesic. Therefore \cite[Proposition 10.46]{O'neil} implies that $\tau(q_\epsilon^-,p)>0$. The same is true for points near $\hat q_\epsilon$, and we could replace $\hat q_-$ with such a point in the above construction. Therefore we may assume without loss of generality that $\gamma^+$ and $\gamma^-$ are not segments of the same null geodesic.  

To get a contradiction let us assume that the two null geodesics $\gamma^{+}$ and $\gamma^{-}$ intersect at a point $\tilde p = (t,x') \in (t^{-},t^{+}) \times M$ and $\tilde p \ne p$. 
Suppose for the moment that $t_p < t < t^+$.
Then following $\gamma^+$ from $q^+$ to $\tilde p$ and $\gamma^-$ from $\tilde p$ to $p$ gives a causal path from $q^+$ to $p$. As $\gamma^+$ and $\gamma^-$ are not segments of the same geodesic, this path is not pregeodesic. Therefore it follows from \cite[Proposition 10.46]{O'neil} that $\tau(p, q^+)>0$, a contradiction. 
The other scenario $t^- < t < t_p$ can be treated analogously. 

In the case that $\gamma^+$ and $\gamma^-$ intersect at $q^+$ we may replace $q^+$ by another point $\tilde q^+$ on $\gamma^+$, such that $\tilde q^+$ is strictly between $q^+$ and $p$ and $\tilde q^+ \in \mho$. Finally, we make an analogous replacement if $\gamma^+$ and $\gamma^-$ intersect at $q^-$.
\end{proof}

We now proceed as in Section~\ref{linearmin}. Consider a point $p=(t_p,x'_p) \in \mathbb D \setminus \mho$. Let $ \gamma^{(0)} := \gamma^{+}$ and $\gamma^{(1)} := \gamma^{-}$ be null geodesics given by Lemma~\ref{generic} with corresponding points $q^{\pm}$ in the set $\rotom$. We denote by ${\xi^{(0)\sharp}}, {\xi^{(1)\sharp}}\in T_p\M$ the tangent vector to $\gamma^{(0)}$ and $\gamma^{(1)}$, respectively, at the point $p$. We may reparametrize $\gamma^{(j)}$ and choose local coordinates near $p$ so that $\g$ coincides with the Minkowski metric at $p$, and that
$$
{\xi^{(0)\sharp}}=(1,\pm \sqrt{1-\sigma^2},\sigma,\underbrace{0,\ldots,0}_{n-2\,\text{times}}),\quad {\xi^{(1)\sharp}}=(1,1,0,\underbrace{0,\ldots,0}_{n-2\,\text{times}})
$$ 
with $\sigma \in [0,1]$. We now define 
$${\xi^{(2)\sharp}}=(1,\sqrt{1-\tilde{\sigma}^2},\tilde{\sigma},\underbrace{0,\ldots,0}_{n-2\,\text{times}}),\quad {\xi^{(3)\sharp}}=(1,\sqrt{1-\tilde{\sigma}^2},-\tilde{\sigma},\underbrace{0,\ldots,0}_{n-2\,\text{times}}),$$
where $\tilde{\sigma} \in (0,1)$. By \cite[Lemma 1]{CLOP}, we have
\bel{linear dependence f}
\tilde{\sigma}^2 {\xi^{(0)\sharp}} + \underbrace{(2b(\sigma)+\mathcal O(\tilde{\sigma}))}_{{\kappa}_1}{\xi^{(1)\sharp}}+\underbrace{(-b(\sigma)+\mathcal O(\tilde{\sigma}))}_{{\kappa}_2}{\xi^{(2)\sharp}}+\underbrace{(-b(\sigma)+\mathcal O(\tilde{\sigma}))}_{{\kappa_3}}{\xi^{(3)\sharp}}=0,
\ee 
where $b(\sigma)=1\mp \sqrt{1-\sigma^2}$. 
As ${\xi^{(0)\sharp}} \ne {\xi^{(1)\sharp}}$ by Lemma \ref{generic}, it holds that $b(\sigma) \ne 0$. In particular, 
$\lim_{\tilde{\sigma}\to 0} \kappa_j(\tilde{\sigma})$ finite and non-zero for $j=1,2,3$. We will write also $\kappa_0(\tilde{\sigma}) = \tilde \sigma^2$.

Now denote by $\gamma^{(j)}$, $j=2,3$, the null geodesics with tangent vector ${\xi^{(j)\sharp}}$ at the point $p$. We choose $\tilde{\sigma}$ sufficiently small so that the geodesics $\gamma^{(j)}$ intersect the set $\rotom$ at some points $q^{(j)}$ near $q^{(1)} := q^{-}$. We write also $q^{(0)} := q^{+}$.
To simplify the discussion, we assume that $\kappa_1 > 0$ and that $\kappa_2, \kappa_3 <0$. Other cases can be treated in a similar manner and are omitted for sake of brevity. With these notations, we consider the null geodesics $\gamma^{(j)}$, $j=0,1,2,3$, with Fermi coordinates $z^{(j)}$ and subsequently construct formal Gaussian beams $u_\tau^{(j)}$ of order 
\begin{equation}\label{N_choice}
N\geq \frac{3n}{2}+7,
\end{equation}
and the form
\bel{gaussianf}
\begin{aligned}
u_\tau^{(0)}&=e^{i\kappa_0\tau\phi^{(0)}}a^{(0)}_{\kappa_0\tau}\quad &u_\tau^{(1)}&=e^{i\kappa_1\tau\phi^{(1)}}a^{(1)}_{\kappa_1\tau},\\
u_\tau^{(2)}&=e^{i\kappa_2\tau\bar{\phi}^{(2)}}\bar a^{(2)}_{\kappa_2\tau}\quad &u_\tau^{(3)}&=e^{i\kappa_3\tau\bar{\phi}^{(3)}}\bar{a}^{(3)}_{\kappa_3\tau},
\end{aligned}
\ee
where the functions $\phi^{(j)}, a^{(j)}$ are exactly as in Section~\ref{WKB} with the initial conditions for all ODEs assigned at the points $q^{(j)}$ in the sense of (\ref{init_conds_q}). Writing $\gamma^{(j)}(s_j) = q^{(j)}$ and $\tilde \xi^{(j)\sharp} = \dot \gamma^{(j)}(s_j)$, we consider the source terms $f_{\tau,q^{(j)},\tilde \xi^{(j)}}$ for $j=1,2,3$ and the test function $f_{\tau,q^{(0)},\tilde \xi^{(0)}}^+$.
Moreover, we define again the three parameter family of source $f_{\tau,\epsilon}$ with $\epsilon = (\epsilon_1, \epsilon_2, \epsilon_3)$
by (\ref{f_family_final}).

\subsection{Recovery of $V$}
\label{waveinterac3}

We start by considering the formal Gaussian beams $u_\tau^{(j)}$  given by (\ref{gaussianf}). Let $f_{\epsilon, \tau}$ be the three parameter family constructed in the previous section and recall that $p$ is an arbitrary point in $\mathbb D \setminus \mho$. In this section, we will prove Theorem \ref{t1} by showing that $V(p)$ is determined by $L_V$. 

Repeating the argument in the beginning of Section \ref{minrecovery}, shows that $L_V$ determines the integral 
$$
\mathcal I = \int_{(0,T)\times \R^n} \mathcal U^{(0)}_\tau \mathcal U^{(1)}_\tau \mathcal U^{(2)}_\tau \mathcal U^{(3)}_\tau \, dV_{\g},
$$
where $dV_{\g}=\sqrt{|\g|}\,dt dx^1\ldots\,dx^n$ denotes the volume form on $(\M,\g)$. 
Using the Sobolev embedding, we obtain from \eqref{sourceest1},
\bel{close1}
\|\mathcal U^{(j)}_\tau - \zeta^{(j)}_- u^{(j)}_\tau\|_{\CI((0,T)\times M)}
\lesssim \tau^{-\frac{n+1}{2}-2}, \quad \forall j=1,2,3,
\ee
where $\zeta^{(j)}_-=1$ in $\mathscr J_{+}(q^{(j)})$.
Indeed, the choice (\ref{N_choice}) guarantees that $K>(n+1)/2 + 2$
where $K$ is as in Lemma \ref{lemerror} with $k=n/2+1$.
Analogously, $\mathcal U^{(0)}_\tau$ satisfies the estimate
\bel{close2}
\|\mathcal U^{(0)}_\tau - \zeta^{(0)}_+ u^{(0)}_\tau\|_{\CI((0,T)\times M)} \lesssim \tau^{-\frac{n+1}{2}-2}.
\ee
with $\zeta^{(0)}_+=1$ in $\mathscr J_{-}(q^{(j)})$.
\\

We proceed to asymptotically analyse $\mathcal I$. Applying the estimates \eqref{close1}--\eqref{close2} together with the boundedness of formal Gaussian beams (see Lemma~\ref{lemerror}), we have 
\bel{expf}\tau^{\frac{n+1}{2}}\mathcal I=\tau^{\frac{n+1}{2}}\int_{(0,T)\times M}u_\tau^{(0)}u_\tau^{(1)} u_\tau^{(2)}u_\tau^{(3)}\,dV_{\g}+\mathcal O(\tau^{-2}).\ee

We will use the method of stationary phase to analyse the product of the four formal Gaussian beams in (\ref{expf}), and need the following lemma. In the lemma we choose $\bar d$ to be an auxiliary distance function on $\mathcal M$.

\begin{lemma}
\label{stationaryphase}
Consider the formal Gaussian beams along the geodesics $\gamma^{(k)}$, $k=0,1,2,3$, in (\ref{gaussianf}), and recall these four geodesic intersect at $p$. Recall also that $\kappa_0, \kappa_1 > 0$ while $\kappa_2, \kappa_3 < 0$. Then the function
$$S:= \kappa_0 \phi^{(0)}+\kappa_1 \phi^{(1)}+\kappa_2 \bar\phi^{(2)}+\kappa_3 \bar{\phi}^{(3)}$$
is well-defined in a small neighborhood of the point $p$ and it holds that
\begin{itemize}
\item[(i)]{$S(p)=0$.}
\item[(ii)]{$\nabla^{\g}S(p)=0$.}
\item[(iii)]{$\Im S(q) \geq m\, \bar{d}(q,p)^2$ for $q$ in a neighborhood of $p$. Here $m>0$ is a constant.}
\end{itemize}
\end{lemma}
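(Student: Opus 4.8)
The plan is to verify the three properties in turn, using the Fermi-coordinate description of each phase from Section~\ref{WKB} together with the linear dependence relation (\ref{linear dependence f}). For each $j$ I work in the Fermi coordinates $(s,z')$ adapted to $\gamma^{(j)}$ and recall from (\ref{phi01}) that the expansion begins as $\phi^{(j)} = z^{(j),1} + \phi_2^{(j)} + \dots$, where $\phi_0^{(j)} = 0$ and each $\phi_\ell^{(j)}$ is a homogeneous polynomial of degree $\ell$ in $z'$. Since all four geodesics pass through $p$, their Fermi tubes share a common neighborhood of $p$, which gives the well-definedness of $S$. Property (i) is then immediate: as $p \in \gamma^{(j)}$ we have $z'(p) = 0$ in the $j$-th coordinates, and every term of $\phi^{(j)}$ is homogeneous of degree $\geq 1$ in $z'$, so $\phi^{(j)}(p) = 0$ and likewise $\bar\phi^{(j)}(p) = 0$; hence $S(p) = \sum_j \kappa_j \cdot 0 = 0$.

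For property (ii) I first compute $\nabla^{\g}\phi^{(j)}(p)$. At $p$ the differential is $d\phi^{(j)}|_p = dz^{(j),1}$, since the quadratic and higher terms contribute nothing to the differential at $z'=0$. Using the normal form (\ref{g_on_gamma}) on $\gamma^{(j)}$, whose inverse satisfies $\g^{01}=1$ and $\g^{k1}=0$ for $k \neq 0$, raising the index gives $\nabla^{\g}\phi^{(j)}(p) = \g^{k1}\partial_k = \partial_s = \dot\gamma^{(j)}(p) = \xi^{(j)\sharp}$. Because the linear term $z^{(j),1}$ is real, $d\bar\phi^{(j)}|_p = dz^{(j),1}$ as well, so $\nabla^{\g}\bar\phi^{(j)}(p) = \xi^{(j)\sharp}$ too. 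Summing with the weights $\kappa_j$ and invoking (\ref{linear dependence f}) (with $\kappa_0 = \tilde\sigma^2$) yields
$$\nabla^{\g}S(p) = \kappa_0\xi^{(0)\sharp} + \kappa_1\xi^{(1)\sharp} + \kappa_2\xi^{(2)\sharp} + \kappa_3\xi^{(3)\sharp} = 0.$$
Thus the role of the linear dependence relation is precisely to make the four phase gradients cancel at $p$, in direct analogy with the cancellation noted after (\ref{linear dependence}) in the Minkowski case.

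The main work is property (iii). Since the $\kappa_j$ are real, $\Im\bar\phi^{(j)} = -\Im\phi^{(j)}$, and $\kappa_0,\kappa_1 > 0$ while $\kappa_2,\kappa_3 < 0$, I obtain
$$\Im S = \kappa_0\Im\phi^{(0)} + \kappa_1\Im\phi^{(1)} + |\kappa_2|\,\Im\phi^{(2)} + |\kappa_3|\,\Im\phi^{(3)},$$
a combination with strictly positive coefficients. By Definition~\ref{gaussbeamdef}(iii) each term satisfies $\Im\phi^{(j)}(q) \geq C|z^{(j)\prime}(q)|^2$, with $z^{(j)\prime}$ the transverse Fermi coordinates of $\gamma^{(j)}$, so $\Im S \geq F$ where $F(q) = c\sum_j |z^{(j)\prime}(q)|^2$ for some $c>0$. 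Now $F \geq 0$, $F(p)=0$ and $dF(p)=0$, so $\mathrm{Hess}_p F$ is a well-defined symmetric bilinear form on $T_p\M$, equal to the positively weighted sum of the Hessians of the individual $|z^{(j)\prime}|^2$. Each of these is positive semidefinite with kernel exactly $\mathbb{R}\,\dot\gamma^{(j)}(p) = \mathbb{R}\,\xi^{(j)\sharp}$. The crucial point is that $\bigcap_j \mathbb{R}\,\xi^{(j)\sharp} = \{0\}$: by Lemma~\ref{generic} the vectors $\xi^{(0)\sharp}$ and $\xi^{(1)\sharp}$ already span distinct lines, so those two kernels meet only at the origin. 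Hence $\mathrm{Hess}_p F$ is positive definite, and a Taylor comparison with the auxiliary distance gives $F(q) \geq m\,\bar d(q,p)^2$ near $p$ for some $m>0$, which is (iii).

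I expect the positive-definiteness step in (iii) to be the only genuine obstacle, since it is where the transversality of the geodesics furnished by Lemma~\ref{generic} is actually used: it is what ensures that the Gaussian decay transverse to each of the four beams combines into honest quadratic decay in all $1+n$ directions away from $p$. By contrast, (i) and (ii) are purely algebraic consequences of the normal form (\ref{g_on_gamma}) and the linear dependence relation (\ref{linear dependence f}).
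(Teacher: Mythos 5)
Your proof is correct and follows the paper's argument in all essentials: (i) is immediate from the vanishing of each phase on its geodesic, (ii) combines $\nabla^{\g}\phi^{(j)}(p)=\xi^{(j)\sharp}$ with the linear dependence relation \eqref{linear dependence f}, and (iii) reduces to the fact that a positively weighted sum of positive semidefinite quadratic forms, each with kernel $\R\,\xi^{(j)\sharp}$, is positive definite because $\xi^{(0)\sharp}$ and $\xi^{(1)\sharp}$ are linearly independent. The only cosmetic difference is that in (iii) you route through the pointwise bound of Definition~\ref{gaussbeamdef}(iii) and take the Hessian of the comparison function $F$, whereas the paper applies the same kernel/transversality argument directly to $D^2\Im\phi^{(k)}$ at $p$; both rest on the identical linear-algebra fact.
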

\begin{proof}
Note that the first claim is trivial as each of the four phases $\phi^{(k)}$ vanish along $\gamma^{(k)}$ and therefore the sum must vanish at the point of intersection $p$. For the second claim, we note that equation \eqref{phi01} applies to show that along each null geodesic $\gamma^{(k)}$ we have $\nabla^{\g} \phi^{(k)}|_{\gamma^{(k)}}=\dot \gamma^{(k)}$. 
Together with \eqref{linear dependence f}, the second claim follows. 

Let us now consider the last claim. Note that it suffices to show that 
$$ D^2\Im S(X,X)>0\quad \forall X \in T_p\M \setminus 0.$$
First note that $\Im S= \sum_{k=0}^3 |\kappa_k| \Im \phi^{(k)} $ implying that $D^2\Im S(X,X)\geq 0$. Indeed, using the Fermi coordinates we see that for each $k=0,1,2,3$,
\begin{align*}
D^2\Im \phi^{(k)}(X,X) &\geq 0 \quad \forall X \in T_p \M,
\\
D^2 \Im \phi^{(k)}(X,X) &>0\quad \forall X \in T_p\M\setminus \spn{\xi^{(k)\sharp}}
\end{align*}
due to (\ref{phi01})--(\ref{positivity}) and the fact that the Christoffel symbols vanish on $\gamma^{(k)}$ in these coordinates (see Lemma \ref{fermi}). 
Since $\xi^{(0)\sharp}$ and $\xi^{(1)\sharp}$ are linearly independent the claim follows.
\end{proof}

We know from Lemma~\ref{generic} that $p$ is the only point of intersection of the four null geodesics $\gamma^{(k)}$, $k=0,1,2,3$.
Thus the product 
$$
u_\tau^{(0)}u_\tau^{(1)} u_\tau^{(2)}u_\tau^{(3)} = e^{i\tau S} a_{\kappa_0\tau}^{(0)}a_{\kappa_1\tau}^{(1)}\bar{a}_{\kappa_2\tau}^{(2)}\bar{a}_{\kappa_3\tau}^{(3)}
$$ 
is supported in a small neighbourhood of $p$. We expand the amplitudes $a_{\kappa_j\tau}^{(j)}$ in terms of the functions $v_k^{(j)}$ as in (\ref{phase-amplitude}), and apply the method of stationary phase (see e.g. Theorem 7.7.5 in \cite{Ho1}) to (\ref{expf}), term-wise after this expansion. This gives
$$
\tau^{\frac{n+1}{2}}\mathcal I= c_{-1} + \tau^{-1} (c_0 v_1^{(0)}(p) + c_1 v_1^{(1)}(p) + c_2 \bar v_1^{(2)}(p) + c_3 \bar v_1^{(3)}(p)) + \mathcal O(\tau^{-2}),
$$
where $c_j$, $j=-1,0,1,2,3$, are non-zero constants that do not depend on the potential $V$. Using the splitting $v_{1,0}^{(j)} = b_{1,0}^{(j)} + c_{1,0}^{(j)}$ as in (\ref{v_1}) together with the fact that $b_{1,0}^{(j)}$ does not depend on $V$, we see that the map $L_V$ uniquely determines the expressions
\[
\sum_{j=0}^1\frac{c_j}{\kappa_j} \int_{0}^{s_j} V(\gamma^{(j)}(s))\,ds + \sum_{j=2}^3\frac{c_j}{\kappa_j} \int_{0}^{s_j} \bar{V}(\gamma^{(j)}(s))\,ds,
\] 
where $\gamma^{(j)}(0) = q^{(j)}$ and $\gamma^{(j)}(s_j) = p$. 
Finally, noting that $\lim_{\tilde{\sigma}\to 0}\kappa_j \neq 0$ for $j=1,2,3$ and $\lim_{\tilde{\sigma}\to 0} \kappa_0=0$, we deduce that the knowledge of the source to solutions map $L_V$ uniquely determines the integral
$$\int_{0}^{s_0}V(\gamma^{(0)}(s))\,ds.$$
Differentiating in $s_0$ gives $V(p)$, and this completes the proof of Theorem~\ref{t1}.

\bigskip
\paragraph{\bf Acknowledgements}
A.F was supported by EPSRC grant EP/P01593X/1. L.O was supported by EPSRC grants EP/R002207/1 and EP/P01593X/1.

\end{document}